\newcommand{\G}{\mathbb{G}}
\newcommand{\C}{\mathbb C}
\newcommand{\p}{\mathbb P}
\newcommand{\F}{\mathbb {F}}
\DeclareMathOperator{\codim}{codim}
\DeclareMathOperator{\rk}{rank}
\DeclareMathOperator{\Sec}{Sec}
\DeclareMathOperator{\Aff}{Aff} 
\newcommand{\QED}{\ifhmode\unskip\nobreak\fi\quad {\rm Q.E.D.}} 
\newcommand\Span[1]{\langle{#1}\rangle}
\renewcommand{\sec}{\mathbb{S}ec}
\DeclareMathOperator{\expdim}{expdim}
\newtheorem{thm}{Theorem}[section]
\newtheorem{Lemma}[thm]{Lemma}
\newtheorem{Proposition}[thm]{Proposition}
\newtheorem{Corollary}[thm]{Corollary}
\newtheorem{Conjecture}[thm]{Conjecture}
\newtheorem*{thm*}{Theorem}
\theoremstyle{definition}
\newtheorem{Notation}[thm]{Notation}
\newtheorem{Definition}[thm]{Definition}
\newtheorem{Remark}[thm]{Remark}
\begin{document}
\title[\resizebox{4.5in}{!}{Bronowski's conjecture and the identifiability of projective varieties}]{Bronowski's conjecture and the identifiability of projective varieties}

\author[Alex Massarenti]{Alex Massarenti}
\address{\sc Alex Massarenti\\ Dipartimento di Matematica e Informatica, Universit\`a di Ferrara, Via Machiavelli 30, 44121 Ferrara, Italy}
\email{msslxa@unife.it}

\author[Massimiliano Mella]{Massimiliano Mella}
\address{\sc Massimiliano Mella\\ Dipartimento di Matematica e Informatica, Universit\`a di Ferrara, Via Machiavelli 30, 44121 Ferrara, Italy}
\email{mll@unife.it}

\date{\today}
\subjclass[2020]{Primary 14N07; Secondary 14N05, 14N15}
\keywords{Secant defectiveness, identifiability, Bronowski's conjecture}

\maketitle

\begin{abstract}
Let $X\subset\mathbb{P}^{hn+h-1}$ be an irreducible and non-degenerate
variety of dimension $n$. The Bronowski's conjecture predicts that $X$
is $h$-identifiable if and only if the general $(h-1)$-tangential
projection $\tau_{h-1}^X:X\dasharrow\mathbb{P}^n$ is birational. In
this paper we provide counterexamples to this conjecture. Building on
the ideas that led to the counterexamples we manage to prove an
amended version of the Bronowski's conjecture for a wide class of
varieties and to reduce the identifiability problem for projective
varieties to their secant defectiveness.
\end{abstract}

\setcounter{tocdepth}{1}
\tableofcontents

\section{Introduction}

Let $X\subset\mathbb{P}^{N}$ be an irreducible and non-degenerate
variety of dimension $n$, and $p\in\left\langle
  x_1,\dots,x_h\right\rangle$ a general point in the linear span of
$h$ general points $x_1,\dots,x_h\in X$. The variety $X$ is
$h$-identifiable if $\left\langle x_1,\dots,x_h\right\rangle$ is the
only $(h-1)$-plane spanned by $h$ points of $X$ that passes through
$p$, when $N = hn+h-1$ we say that $X$ is generically identifiable. It is easy to guess that generic identifiability is a rare property that deserves attention.

Identifiability is interesting both from the viewpoint of birational
geometry, providing unexpected birational maps onto the
projective space, and in many applications especially when the ambient
projective space parametrizes tensors. 

We will denote by $\tau^X_{h-1}:X\dasharrow
X_{h-1}\subset\mathbb{P}^{N_{h-1}}$ the projection from the span of
the tangent spaces of $X$ at $h-1$ general points, and we will refer
to it as a general $(h-1)$-tangential projection. The pioneering work
of J. Bronowski \cite{Bro32} was meant to relate the
$h$-identifiability of $X$ with the birationality of
$\tau^X_{h-1}$. His main legacy is the so called Bronowski's
conjecture, see \cite{CR06} for a modern statement.  

\begin{Conjecture}\label{CBroC}
Let $X\subset\mathbb{P}^{hn+h-1}$ be an irreducible and non-degenerate
variety of dimension $n$. The variety $X$ is $h$-identifiable if and
only if a general $(h-1)$-tangential projection $\tau_{h-1}^X:X\dasharrow\mathbb{P}^n$ is birational.   
\end{Conjecture}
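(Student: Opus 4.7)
The natural framework is Terracini's lemma: for general points $x_1,\dots,x_h\in X$, the tangent space to the $h$-th secant variety $\sec_h(X)$ at a general point $p\in\langle x_1,\dots,x_h\rangle$ equals $\Lambda_h:=\langle T_{x_1}X,\dots,T_{x_h}X\rangle$. The $(h-1)$-tangential projection $\tau_{h-1}^X$ is the linear projection from the center $\Lambda_{h-1}=\langle T_{x_1}X,\dots,T_{x_{h-1}}X\rangle$ restricted to $X$; in the critical dimension $N=hn+h-1$ and under non-defectiveness, it lands in a $\mathbb{P}^n$. Identifiability is thus to be studied through the fibers of $\tau_{h-1}^X$, so the whole plan is to translate ``two decompositions of a point of $\sec_h(X)$'' into ``two preimages under $\tau_{h-1}^X$'' and vice versa.

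The direction $(\Rightarrow)$ is the classical one. Assume $X$ is generically $h$-identifiable. Fix general $x_1,\dots,x_{h-1}\in X$ and a further general $y\in X$. If a distinct $y'\in X$ satisfied $\tau_{h-1}^X(y')=\tau_{h-1}^X(y)$, then the line $\langle y,y'\rangle$ would meet $\Lambda_{h-1}$; a standard Terracini degeneration, replacing each $T_{x_i}X$ by a pair of infinitely near points and then specialising to nearby honest pairs on $X$, would produce two distinct decompositions of a general point of $\sec_h(X)$, contradicting identifiability. Hence $\tau_{h-1}^X$ is birational, and I would reproduce this argument essentially verbatim.

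The direction $(\Leftarrow)$ is the heart of the matter, and this is where I expect the plan to break down. The natural attempt is to reverse the previous step: starting from two alleged decompositions $p\in\langle x_1,\dots,x_h\rangle\cap\langle x_1',\dots,x_h'\rangle$ of a general point of $\sec_h(X)$, try to deform the first $h-1$ points in each tuple toward a common configuration while preserving both decompositions, so that $x_h$ and $x_h'$ become distinct preimages of a single point under some $\tau_{h-1}^X$, contradicting birationality. The obstruction is that the two decompositions may be rigidly linked by ambient geometry, for example through a subvariety swept out by secant lines, through a fibration on $X$, or through hidden defectiveness of an intermediate secant, and the required deformation simply cannot be performed. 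Isolating this rigidity is exactly where the announced counterexamples must live; accordingly, the strategy for a corrected statement would be to supplement birationality of $\tau_{h-1}^X$ with a hypothesis ruling out such constrained configurations, say non-defectiveness of all intermediate secants or a transversality condition on the tangential projections at earlier steps.
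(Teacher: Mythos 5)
You were asked to prove a statement that this paper in fact \emph{refutes}: Conjecture~\ref{CBroC} is false, and Theorem~\ref{mainA} together with Theorem~\ref{mainCE} exhibits counterexamples, namely $X=\sec_r(\Gamma)$ for $\Gamma\subset\mathbb{P}^N$ a rational normal curve of degree $N\geq 7$ with $h=\frac{N+1}{2r}$ an integer. So no complete proof could have succeeded, and your decision not to claim the direction $(\Leftarrow)$ is exactly right. Your treatment of $(\Rightarrow)$ is the known half (the paper attributes it to \cite[Corollary 4.5]{CR06}), and your diagnosis of where $(\Leftarrow)$ breaks is strikingly close to what actually happens: the obstruction in the paper's counterexamples is precisely a family of decompositions rigidly linked ``through a subvariety swept out by secant lines.'' Concretely, a general point $p\in\sec_h(\sec_r(\Gamma))$ lies in the span of $hr$ general points of $\Gamma$, and Lemma~\ref{lspg} and Proposition~\ref{secnoId} show that \emph{every} regrouping of those $hr$ points into $h$ disjoint $r$-tuples yields a distinct $(h-1)$-plane through $p$ that is $h$-secant to $X$; meanwhile Theorem~\ref{mainCE} shows that the general $(h-1)$-tangential projection of $X$ remains birational, because it restricts to a birational tangential projection of $\Gamma$ whose image is still $r$-identifiable. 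These extra decompositions cannot be deformed into distinct fibers of $\tau_{h-1}^X$, exactly the rigidity you anticipated.

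Where your proposal and the paper part ways is in the corrective hypothesis. You suggest non-defectiveness of intermediate secants or a transversality condition on earlier tangential projections; neither excludes the counterexample, since $\sec_r(\Gamma)$ can be arranged to be non-defective in all relevant ranges. The condition the paper isolates is non-degeneracy of the Gauss map of $X$ (equivalently, $X$ not $1$-tangentially weakly defective): by Terracini, $\sec_r(\Gamma)$ has degenerate Gauss map, which is what permits the phenomenon. This leads to the amended Conjecture~\ref{con:Bro_mio}, and to Theorem~\ref{mainB}, which proves $h$-identifiability under non-degenerate Gauss map together with non-$(h+1)$-defectiveness. If you want to turn your sketch into something provable, that is the hypothesis to add.
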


The state of the art on Conjecture~\ref{CBroC} is the
following: it is known that $h$-identifiability implies the birationality of a general $\tau_{h-1}^X$, see \cite{CMR04} in a special case and \cite[Corollary 4.5]{CR06} in general. The full conjecture is true for curves \cite{CK22}, smooth surfaces \cite{CR06} and linearly normal smooth $3$-folds when $h = 2$ \cite{CMR04}.

In modern times the study of identifiability has been greatly
generalized with the help of secant varieties and tools developed for
the study of various types of contact loci,  \cite{CC02}  \cite{CR06}
\cite{Me06} \cite{BBC} \cite{CM19}\cite{CM21}.

The $h$-secant variety
$\sec_h(X)\subset\mathbb{P}^N$ of $X$ is the Zariski closure of the
union of the $(h-1)$-planes spanned by collections of $h$ points of
$X$.
The expected dimension of $\mathbb{S}ec_{h}(X)$ is
$$\expdim(\mathbb{S}ec_{h}(X)):= \min\{nh+h-1,N\}.$$
The actual
dimension of $\mathbb{S}ec_{h}(X)$ may be  smaller than the expected
one. The variety $X$ is $h$-defective if
$\dim(\mathbb{S}ec_{h}(X)) 
< \expdim(\mathbb{S}ec_{h}(X))$ and 
$h$-identifiable if through a  general point of $\sec_h(X)$ there passes a unique
$(h-1)$-plane spanned by $h$ points of $X$.

In particular, in \cite{CR06} the authors  proposed a modified version of the Bronowski's conjecture that apparently generalizes it. 

\begin{Conjecture}\cite[Remark 4.6.]{CR06}\label{CBro}
Let $X\subset\mathbb{P}^N$ be an irreducible and non-degenerate
variety. If a general $(h-1)$-tangential projection of $X$ is
birational and its image is a variety of minimal degree then $X$ is
$h$-identifiable and $\sec_h(X)$ has degree $\binom{h + c_h}{h}$, where $c_h = \codim_{\mathbb{P}^N}(\sec_h(X))$.   
\end{Conjecture}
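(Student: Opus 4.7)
Fix general points $x_1,\ldots,x_{h-1}\in X$, set $\Lambda=\langle T_{x_1}X,\ldots,T_{x_{h-1}}X\rangle$, and let $\pi_\Lambda:\mathbb{P}^N\dasharrow\mathbb{P}^M$ denote the associated linear projection, so that $\tau^X_{h-1}=\pi_\Lambda|_X$. Terracini's lemma gives $\dim\Lambda=(h-1)(n+1)-1$, hence $M=N-(h-1)(n+1)$; the birationality hypothesis forces $\dim Y=n$ and, via Terracini applied at an $h$-th point, the non-defectiveness of $\sec_h(X)$. Consequently $\codim_{\mathbb{P}^M}(Y)=c_h$ and, by minimality, $\deg Y=c_h+1$. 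Now take a general $p\in\sec_h(X)$ with representation $p\in\langle x_1,\ldots,x_h\rangle$, so that $q:=\pi_\Lambda(p)=\tau^X_{h-1}(x_h)\in Y$. Assume a second, genuinely distinct decomposition $p\in\langle y_1,\ldots,y_h\rangle$. Cycling through the various $(h-1)$-subsets of $\{x_1,\ldots,x_h\}\cup\{y_1,\ldots,y_h\}$ as tangent centers produces a whole family of tangential projections of $p$ landing in $Y$ and satisfying sharp incidence relations inside $\sec_h(Y)$. The rigid structure of varieties of minimal degree (classified by Del~Pezzo--Bertini as $\mathbb{P}^n$, quadrics, rational normal scrolls, or the Veronese surface), combined with the birationality of $\tau^X_{h-1}$, forces these incidences to split only when $\{y_j\}=\{x_i\}$ as unordered sets, yielding the desired $h$-identifiability.

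\textbf{Degree formula.} The identifiability just proved implies that $\pi_\Lambda$ induces a birational map $\sec_h(X)\dasharrow\sec_h(Y)$. Since the center $\Lambda\subset\sec_{h-1}(X)\subset\sec_h(X)$, computing $\deg\sec_h(X)$ from $\deg\sec_h(Y)$ via the classical projection formula introduces a combinatorial multiplicity along $\Lambda$ which, after the normalization coming from the tangent-space structure of $\Lambda$, collapses to one. For $Y$ of minimal degree the formula $\deg\sec_h(Y)=\binom{h+c_h}{h}$ is classical (Palatini/Catalecticant; it reduces to $\binom{d-h+1}{h}$ for rational normal curves of degree $d$, and takes the analogous binomial shape for the remaining minimal degree varieties by inspection through the Del~Pezzo--Bertini list). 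Combining, $\deg\sec_h(X)=\binom{h+c_h}{h}$.

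\textbf{Main obstacle.} The decisive step is the rigidity assertion inside the identifiability argument: that two distinct decompositions of $p$ must produce incompatible incidence data on $Y$. This is precisely the place where the original Bronowski conjecture fails, and the counterexamples described by the authors (in which $\tau^X_{h-1}$ is birational but the image is not of minimal degree) confirm that minimality cannot be bypassed. Making the rigidity rigorous will likely require either a patient case analysis through the four classes of varieties of minimal degree, or a more abstract ``identifiability of $Y$ $\Rightarrow$ identifiability of $X$'' transfer principle along the birational tangential projection, with particular care for the degenerate positions in which some $y_j$ collides with one of the tangent points $x_i$ and $\pi_\Lambda$ acquires an indeterminacy on the decomposition.
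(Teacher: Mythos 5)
The statement you set out to prove is a \emph{conjecture} quoted from [CR06], and the central point of this paper is that it is \emph{false}: Theorem \ref{mainA} and Theorem \ref{mainCE} construct explicit counterexamples. Concretely, take a rational normal curve $\Gamma\subset\mathbb{P}^N$ of degree $N\geq 7$ with $h=\frac{N+1}{2r}$ an integer, and set $X=\sec_r(\Gamma)$. Then $X$ is not $h$-defective, the general $(h-1)$-tangential projection $\tau^X_{h-1}$ is birational onto $X_{h-1}=\mathbb{P}^{N_{r(h-1)}}$ --- a variety of minimal degree --- and yet $X$ is not $h$-identifiable: a general point $p\in\sec_h(X)$ lies in the span of $hr$ general points $y^i_j$ of $\Gamma$, and by Lemma \ref{lspg} and Proposition \ref{secnoId} every partition of these $hr$ points into $h$ blocks of size $r$ produces a distinct $(h-1)$-plane through $p$ that is $h$-secant to $X$. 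So no correct proof of the statement can exist, and your argument must break somewhere.

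It breaks exactly at the step you yourself flag as the ``main obstacle'': the rigidity assertion that two distinct decompositions of $p$ would force incompatible incidence data on the minimal-degree image $Y$. This is not merely unproven, it is false --- in the counterexample above $Y=\mathbb{P}^n$ is as rigid and minimal as possible, and all the incidence relations you invoke are satisfied by the spurious decompositions, because $X$ has degenerate Gauss map (it is $1$-tangentially weakly defective): by Terracini the tangent space of $X=\sec_r(\Gamma)$ is constant along each $r$-secant $(r-1)$-plane of $\Gamma$, so cycling the tangent centers cannot separate the partitions. You also misread the paper when you assert that its counterexamples have image ``not of minimal degree'' and therefore only affect the original Conjecture \ref{CBroC}: in Theorem \ref{mainA} the image is a projective space, so the counterexamples refute Conjecture \ref{CBro} as well. (The degree-formula paragraph has the same status: it rests on the identifiability you cannot establish, and the ``multiplicity collapses to one'' claim is unjustified.) What the paper actually salvages is the amended Conjecture \ref{con:Bro_mio}, which adds the hypothesis that the Gauss map of $X$ is non-degenerate, together with Theorem \ref{mainB}; the proofs there proceed through tangential contact loci and the secant map $\pi^X_{h+1}$ (Proposition \ref{ldiff}, Lemma \ref{lem:Bro}, Theorem \ref{main2}) rather than through the Del Pezzo--Bertini classification.
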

Note that by \cite[Theorem 4.2]{CR06} $\binom{h + c_h}{h}$ is the smallest possible degree for $\sec_h(X)$, and if you are puzzled by the addition of the minimal degree requirement observe that in the classical Bronowsky's conjecture the corresponding varieties are projective spaces and hence minimal degree varieties. Recently, in \cite[Corollary 3.9]{CK22} J. Choe and S. Kwak observed that
Conjecture \ref{CBro} is implied by Conjecture~\ref{CBroC}.

In Theorem \ref{mainCE} we give counterexamples to both Conjecture~\ref{CBro} and Conjecture~\ref{CBroC}. As a sample we have the following result:

\begin{thm}\label{mainA}
Let $\Gamma\subset\mathbb{P}^N$ be a rational normal curve of degree $N\geq 7$ and assume $h = \frac{N+1}{2r}$ to be an integer. Then $\sec_r(\Gamma)$ gives a counterexample to Conjecture~\ref{CBroC} and Conjecture~\ref{CBro}.  
\end{thm}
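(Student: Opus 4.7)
The plan is to exhibit $X := \sec_r(\Gamma)$ as a variety whose general $(h-1)$-tangential projection is birational but which nevertheless fails $h$-identifiability. Throughout I take $r \geq 2$ and $h \geq 2$; the case $r = 1$ reduces to $\Gamma$ itself, already covered by \cite{CK22}, while $h = 1$ is trivial. First I would verify the numerics: since rational normal curves are never secant defective, $\dim X = 2r - 1 =: n$, so $N = 2rh - 1 = hn + h - 1$ matches the ambient dimension required by Conjecture~\ref{CBroC}, and $\sec_h(X) = \sec_{hr}(\Gamma)$ has dimension $\min(2hr - 1,\,N) = N$, hence coincides with $\mathbb{P}^N$.

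Second, I would establish non-identifiability. By the classical identifiability of the middle secant of a rational normal curve of odd degree, a general $p \in \mathbb{P}^N$ lies on a unique $(hr-1)$-plane secant to $\Gamma$, spanned by $hr$ points $P_1, \dots, P_{hr}$. Writing $p = \sum a_i P_i$ and grouping terms according to any unordered partition $\{S_1,\dots,S_h\}$ of $\{P_1, \dots, P_{hr}\}$ into $h$ blocks of size $r$, the partial sums $q_j := \sum_{i \in S_j} a_i P_i$ yield $h$ points of $X = \sec_r(\Gamma)$ with $p \in \langle q_1, \dots, q_h \rangle$. Distinct partitions yield distinct $h$-tuples and hence distinct $(h-1)$-planes through $p$; the count of partitions is $(hr)!/((r!)^h\, h!)$, strictly greater than $1$ whenever $r, h \geq 2$, so $X$ is not $h$-identifiable.

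Third, I would show that $\tau_{h-1}^X$ is birational. By Terracini's lemma, the tangent space $T_y X$ at a general $y \in \langle Q_1, \dots, Q_r \rangle$ equals $\langle T_{Q_1}\Gamma, \dots, T_{Q_r}\Gamma \rangle$, so the span $L$ of $h-1$ general tangent spaces of $X$ is the span of $r(h-1)$ tangent lines to $\Gamma$ at general points. The linear projection $\pi$ from $L$ restricts on $\Gamma$ to a birational map onto a rational normal curve $\Gamma' \subset \mathbb{P}^{2r-1}$ of degree $N - 2r(h-1) = 2r - 1$, and induces a map $X = \sec_r(\Gamma) \dasharrow \sec_r(\Gamma') = \mathbb{P}^{2r-1}$ coinciding with $\tau_{h-1}^X$. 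Applying once more the classical identifiability of the middle secant, this time to $\Gamma'$, a general $\bar x \in \mathbb{P}^{2r-1}$ lies on a unique $(r-1)$-secant of $\Gamma'$, which lifts to a unique $(r-1)$-secant of $\Gamma$, and a unique preimage $x \in X$ is then singled out by the linear isomorphism between the two secant planes. Since the image $\mathbb{P}^{2r-1}$ is trivially a variety of minimal degree, both Conjecture~\ref{CBroC} and Conjecture~\ref{CBro} predict $h$-identifiability of $X$, contradicting Step 2.

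The subtle point is verifying in Step 3 that the generic fibre of $\tau_{h-1}^X$ is reduced to a single point. This boils down to the transversality claim that a generic $(r-1)$-secant plane of $\Gamma$ meets $L$ trivially, which holds by a dimension count since $L$ is determined by $r(h-1)$ points of $\Gamma$ chosen generically with respect to the further $r$ points defining the secant plane; after discarding the residual intersection of $X \cap \langle \bar x, L \rangle$ with $L$, the unique preimage identified above exhausts the fibre.
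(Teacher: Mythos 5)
Your proposal is correct and follows essentially the same route as the paper: non-identifiability comes from regrouping the $hr$ points of the unique $(hr-1)$-secant plane through a general $p$ into $h$ blocks of $r$ (your explicit partial-sum construction is a direct special case of the paper's Lemma~\ref{lspg} and Proposition~\ref{secnoId}), and birationality of $\tau_{h-1}^X$ comes from identifying it, via Terracini, with the map induced on $\sec_r$ by the birational projection $\Gamma\dasharrow\Gamma'$ and invoking $r$-identifiability of the image curve, exactly as in the paper's Theorem~\ref{mainCE}. The only difference is that the paper packages the argument as a general statement for $\sec_r(Y)$ of an arbitrary $Y$ and then specializes, while you work directly with the rational normal curve.
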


Note that the variety  considered in Theorem \ref{mainA} has
degenerate Gauss map or, in a different dictionary, it is $1$-tangentially weakly defective. Despite various attempts we have not been able to extend the counterexamples to a wider class of varieties. It is therefore tempting to state the following:

\begin{Conjecture}\label{con:Bro_mio}
Let $X\subset\mathbb{P}^{N}$ be an irreducible and non-degenerate variety with non degenerate Gauss map. The variety $X$ is $h$-identifiable if and only if a general $(h-1)$-tangential projection $\tau_{h-1}^X:X\dasharrow X_{h-1}\subset\mathbb{P}^{N_{h-1}}$ is birational. 
\end{Conjecture}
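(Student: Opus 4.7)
\medskip
\noindent\textbf{Proof proposal.} The implication ``$h$-identifiable $\Rightarrow \tau_{h-1}^X$ birational'' is \cite[Corollary 4.5]{CR06} and requires no hypothesis on the Gauss map, so we focus on the converse. The strategy is to argue by contradiction through the Chiantini--Ciliberto machinery of weakly defective varieties, with non-degeneracy of $\gamma_X$ supplying the extra rigidity absent in the counterexamples of Theorem~\ref{mainA}.

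Assume $\tau_{h-1}^X$ is birational and, for contradiction, that $X$ is not $h$-identifiable. By the standard Terracini-type criterion, non-identifiability forces $X$ to be $(h-1)$-weakly defective: a general hyperplane $H$ containing $\langle T_{x_1}X,\dots,T_{x_h}X\rangle$ is tangent to $X$ along a positive-dimensional contact locus $\Sigma_H\supset\{x_1,\dots,x_h\}$. Set $\Pi:=\langle T_{x_1}X,\dots,T_{x_{h-1}}X\rangle$ and $\bar H:=\pi_\Pi(H)\subset\mathbb{P}^n$, which is a proper hyperplane. For $y\in\Sigma_H$ generic one has $T_yX\subset H$, hence $\pi_\Pi(T_yX)\subset\bar H$. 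On the other hand, birationality of $\tau_{h-1}^X$ forces $T_yX\cap\Pi=\emptyset$ for a generic point $y\in X$, in which case $\pi_\Pi(T_yX)=\mathbb{P}^n\not\subset\bar H$. Therefore $\Sigma_H$ must be entirely contained in the tangential bad locus $B:=\{y\in X:T_yX\cap\Pi\neq\emptyset\}$.

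The core of the argument is then to show that non-degenerate $\gamma_X$ prevents the inclusion $\Sigma_H\subset B$. Generic finiteness of $\gamma_X$ ensures that $B$ has codimension exactly one, being realized as $\gamma_X^{-1}$ of the Schubert hypersurface $\{V\in\Grass(n,N):V\cap\Pi\neq\emptyset\}$. The plan is to analyze, via the second fundamental form of $X$ and the Gauss image of $\Sigma_H$, how the tangent spaces along $\Sigma_H$ are forced to align in order to keep their intersection with $\Pi$ non-empty as $y$ moves; the expectation is that such a sustained alignment along a positive-dimensional family would force the differential of $\gamma_X$ to drop rank along $\Sigma_H$, contradicting generic finiteness of $\gamma_X$. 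The main obstacle is precisely this last step: extracting a clean dimension count from the interplay between $\Pi$, the embedded tangent cone of $X$ along $\Sigma_H$ and the Gauss map, and handling the degenerations in which $\Sigma_H$ acquires singularities or reducible components. It is exactly the failure of this rigidity in the $(h-1)$-tangentially weakly defective regime populated by $\sec_r(\Gamma)$ for $r\geq 2$, where the Gauss map collapses along the rulings by tangent spaces of $\Gamma$, that produces the counterexamples of Theorem~\ref{mainA} and motivates the Gauss-map hypothesis.
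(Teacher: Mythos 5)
The statement you are proving is a \emph{Conjecture} in the paper, and the paper does not prove it: the authors explicitly note that the direction ``$\tau_{h-1}^X$ birational $\Rightarrow$ $X$ is $h$-identifiable'' remains open, and they only establish it under the additional hypotheses $(h+1)n+h\leq N$ and $X$ not $(h+1)$-defective (Theorem~\ref{mainB} and Corollary~\ref{cor:Bro_mio_vera}, via Theorem~\ref{main2}). Your proposal is likewise not a proof: you yourself flag that the decisive step --- showing that a positive-dimensional contact locus contained in the ``bad locus'' $B$ forces the Gauss map to drop rank --- is only an ``expectation.'' That is precisely the hard content of the conjecture, so the attempt has a genuine, self-acknowledged gap and cannot be accepted as a proof of the statement.

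Beyond the admitted gap, two steps of your reduction need repair. First, the conclusion $\Sigma_H\subset B$ rests on the claim that $T_yX\cap\Pi=\emptyset$ implies $\pi_\Pi(T_yX)=\mathbb{P}^n\not\subset\bar H$; this identifies the target of $\tau_{h-1}^X$ with $\mathbb{P}^n$, which is the perfect case $N=hn+h-1$ of Conjecture~\ref{CBroC}, whereas Conjecture~\ref{con:Bro_mio} allows $N_{h-1}>n$, in which case an $n$-plane can perfectly well lie in a hyperplane of $\mathbb{P}^{N_{h-1}}$. Second, non-identifiability gives tangency of the span $\langle T_{x_1}X,\dots,T_{x_h}X\rangle$ itself along a positive-dimensional locus ($h$-tangential weak defectiveness), which is the notion the paper actually exploits; working with a general hyperplane $H$ containing that span loses information. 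For comparison, the paper's partial result (Theorem~\ref{main2}) does not try to contradict generic finiteness of the Gauss map directly: it propagates the condition ``$A_w\subseteq T_wX$ meets $\langle T_{x_1}X,\dots,T_{x_h}X\rangle$'' along chains of irreducible components of tangential contact loci until it holds at a general point of $X$, and then concludes by Terracini that $\pi_{h+1}^X$ is of fiber type, contradicting non-$(h+1)$-defectiveness; the Gauss-map hypothesis enters only to guarantee $\dim\bigl(\bigcup_w A_w\bigr)\geq n+1$. If you want a provable statement, you should add the hypotheses of Theorem~\ref{mainB}; as stated, the conjecture is open and your sketch does not close it.
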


As already observed, thanks to \cite[Corollary 4.5]{CR06} if $X$ is $h$-identifiable then $\tau_{h-1}^X$ is birational, and hence only one direction of the Conjecture~\ref{con:Bro_mio} is open.

This new formulation of the conjecture echoes results obtained in \cite{CM19} that connected non identifiability and defectiveness. In our second main result we improve the core result of \cite{CM19} and give the ultimate relation between defectiveness and identifiablity.

\begin{thm}\label{mainB}
Let $X\subset\mathbb{P}^N$ be an irreducible and non-degenerate variety of dimension $n$, $h\geq 1$ an integer, and assume that 
\begin{itemize}
\item[-] $(h+1)n+h \leq N$;
\item[-] $X$ has non degenerate Gauss map;
\item[-] $X$ is not $(h+1)$-defective. 
\end{itemize}
Then $X$ is $h$-identifiable. 
\end{thm}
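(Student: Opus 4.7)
The plan is to argue by contradiction: assume that $X$ satisfies all three hypotheses but is \emph{not} $h$-identifiable, and deduce that $X$ must be $(h+1)$-defective, contradicting the third assumption.

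\textbf{Step 1: from non-identifiability to the tangential contact locus.} Through a general point $p\in\sec_h(X)$ pass two distinct secant $(h-1)$-planes $\langle x_1,\ldots,x_h\rangle$ and $\langle y_1,\ldots,y_h\rangle$. Applying Terracini's lemma to both $h$-tuples, I would get
$$\langle T_{y_1}X,\ldots,T_{y_h}X\rangle\;\subseteq\;T_{\sec_h(X),p}\;=\;\Lambda:=\langle T_{x_1}X,\ldots,T_{x_h}X\rangle.$$
In particular $T_{y_j}X\subseteq\Lambda$ for every $j$, so every $y_j$ lies in the contact locus
$$\Sigma\;:=\;\overline{\{z\in X\,:\,T_zX\subseteq\Lambda\}}.$$

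\textbf{Step 2: structure of $\Sigma$ via the Gauss hypothesis.} The points $x_1,\ldots,x_h$ themselves belong to $\Sigma$, and a standard monodromy/irreducibility argument in the spirit of \cite{CC02,CM19} identifies $h$ irreducible components $\Sigma_1,\ldots,\Sigma_h$ of $\Sigma$, with $x_i\in\Sigma_i$ and (after renumbering) $y_i\in\Sigma_i$. Crucially, I would use the non-identifiability to construct a one-parameter family of second $h$-tuples $(y_1(t),\ldots,y_h(t))$ whose spans share the $p$-passing plane with $(x_1,\ldots,x_h)$; the hypothesis that the Gauss map of $X$ is \emph{non-degenerate} rules out the only way for this family to collapse (namely distinct points sharing the same tangent space), so each $\Sigma_i$ has positive dimension.

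\textbf{Step 3: contradiction with non-$(h+1)$-defectiveness.} Fix a general point $z\in\Sigma_1$, $z\neq x_1$, and form the $(h+1)$-tuple $(x_1,\ldots,x_h,z)$. By Terracini's lemma the tangent space of $\sec_{h+1}(X)$ at a general point of $\langle x_1,\ldots,x_h,z\rangle$ is
$$\langle T_{x_1}X,\ldots,T_{x_h}X,T_zX\rangle\;=\;\Lambda,$$
the last equality because $T_zX\subseteq\Lambda$ by definition of $\Sigma_1$. I would then show that as $(x_1,\ldots,x_h)$ varies generically on $X^h$ and $z$ varies in the corresponding $\Sigma_1$, the resulting $(h+1)$-tuples dominate a generic family on $X^{h+1}$ — again using the non-degeneracy of the Gauss map to prevent the specialization from being essential — so that Terracini's lemma genuinely computes $\dim\sec_{h+1}(X)$. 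This yields
$$\dim\sec_{h+1}(X)\;\leq\;\dim\Lambda\;\leq\;hn+h-1\;<\;(h+1)n+h\;=\;\expdim\sec_{h+1}(X),$$
where the dimension bound $(h+1)n+h\leq N$ makes the expected value the right-hand one. This contradicts non-$(h+1)$-defectiveness and completes the argument.

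\textbf{Main obstacle.} The delicate point is the passage between Steps~2 and~3: one must extract, from the mere failure of identifiability, a contact component $\Sigma_1$ whose generic point $z$ behaves as a \emph{generic} $(h+1)$-st point of $X$ for the purposes of Terracini's lemma on $\sec_{h+1}(X)$. This is precisely where the non-degenerate Gauss map is indispensable: the counterexamples produced in Theorem~\ref{mainA} show that for varieties with degenerate Gauss map the contact locus may fail to detect $(h+1)$-defectiveness, since tangent-space information can be absorbed without creating new secant collapse. Making this genericity argument quantitative — most likely by fibering the incidence $\{(x_1,\ldots,x_h,z): z\in \Sigma_1(x_1,\ldots,x_h)\}$ over $X^h$ and comparing its image in $\sec_{h+1}(X)$ with that of $X^{h+1}$ — is where I expect the core technical work of the proof to lie, refining the analogous step carried out in \cite{CM19}.
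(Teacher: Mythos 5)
Your Steps 1 and 2 are sound and match the opening of the paper's argument: non-identifiability forces the second decomposition into the tangential contact locus, and the contact locus is positive-dimensional. (Minor point: the positive-dimensionality is not where the Gauss hypothesis enters; it already follows from non-identifiability via \cite[Proposition 14]{CM19}, i.e.\ non-identifiable implies $h$-tangentially weakly defective.) The overall strategy --- derive $(h+1)$-defectiveness and contradict the third hypothesis --- is also the paper's.

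The gap is in Step 3, and it is not merely a technicality you have deferred: the mechanism you propose cannot work as stated. The family of tuples $(x_1,\dots,x_h,z)$ with $z\in\Sigma_1(x_1,\dots,x_h)$ has dimension $hn+\gamma_h$, and $\gamma_h<n$ whenever the contact locus is a proper subvariety of $X$ (the typical case), so this family can never dominate $X^{h+1}$. Terracini applied at such special tuples only controls the tangent space of $\sec_{h+1}(X)$ at special points, and since $\dim\langle T_{x_1}X,\dots,T_{x_{h+1}}X\rangle$ is lower semicontinuous in the points, a drop at a special configuration gives no information at the general one. The correct target is strictly weaker than what you aim for: one must show $T_\xi X\cap\langle T_{x_1}X,\dots,T_{x_h}X\rangle\neq\emptyset$ (non-empty intersection, not containment) for $\xi\in X$ \emph{general}, which by Terracini already forces $(h+1)$-defectiveness. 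The paper's proof of Theorem~\ref{main2} gets there by a propagation argument: it attaches to each point $w$ of a curve in a contact-locus component a limit $n$-plane $A_w$, shows via the non-degenerate Gauss map that the span $T(\widetilde{\Gamma}^1_p)$ of these planes has dimension $\geq n+1$ (this is where the Gauss hypothesis actually does its work, yielding $a\geq 1$ and hence the intersections in (\ref{equ0})--(\ref{equ1})), and then transports the intersection property along chains of components $\Gamma^1_w$, $\Gamma^h_z$ until the moving point sweeps out a set $Z$ whose general point is a general point of $X$. Two further ingredients you are missing: the case where the general $h$-tangential contact locus is an \emph{irreducible curve} must be excluded first (Proposition~\ref{cldim1}, which rests on the curve case of Bronowski's conjecture, Proposition~\ref{Bro_cur}), since otherwise one cannot choose a curve $\widetilde{\Gamma}^i_p\subset\Gamma^i_p$ avoiding $x_i$; and the case analysis on whether $A_z$ meets $T_{x_h}X$ is needed to set up the chain. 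None of this is recoverable from the genericity heuristic in your Step 3, so the proposal as written does not constitute a proof.
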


As for the Bronowski's conjecture, the assumption on the non
degeneracy of the Gauss map of $X$ can not be dropped, see Remark
\ref{n1twdn} for an explicit example. The inequality $(h+1)n+h \leq N$
is also necessary. For instance, the degree six Veronese embedding
$V^2_6\subset\mathbb{P}^{27}$ of $\mathbb{P}^2$ is not $10$-defective \cite{AH95}
but through a general point of $\sec_9(V^2_6)$ there are two
$8$-planes intersecting $V^2_6$ in nine points \cite[Theorem 1.1]{COV17}.  

As a straightforward consequence of Theorem \ref{mainB} we obtain the following special case of Conjecture~\ref{con:Bro_mio}.

\begin{Corollary}\label{cor:Bro_mio_vera} 
Let $X\subset\mathbb{P}^{N}$ be an irreducible and non-degenerate variety of dimension $n$. Assume that $(h+1)n+h \leq N$ and $X$ is not $(h+1)$-defective. Then
Conjecture~\ref{con:Bro_mio} holds true for $X$ and $h$. 
\end{Corollary}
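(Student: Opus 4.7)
The plan is to deduce the corollary essentially immediately from Theorem~\ref{mainB} together with the already-cited \cite[Corollary 4.5]{CR06}. The only subtlety is that Conjecture~\ref{con:Bro_mio} carries the standing hypothesis that $X$ has non-degenerate Gauss map, while the corollary as stated does not repeat this. So I would first split into two cases according to whether the Gauss map of $X$ is degenerate: in the degenerate case, Conjecture~\ref{con:Bro_mio} is vacuous for this $X$ and there is nothing to prove, so the real content is the non-degenerate Gauss map case.

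Assume then that $X\subset\mathbb{P}^N$ has non-degenerate Gauss map, $(h+1)n+h\leq N$, and $X$ is not $(h+1)$-defective. These are exactly the hypotheses of Theorem~\ref{mainB}, so I would apply it directly to conclude that $X$ is $h$-identifiable. This already gives one of the two implications of Conjecture~\ref{con:Bro_mio} (the one stating $h$-identifiability); the reverse implication in the conjecture asks that birationality of $\tau_{h-1}^X$ implies $h$-identifiability, and under our hypotheses the conclusion $h$-identifiability holds unconditionally, so this implication is trivially satisfied. For the remaining direction of the ``iff'' I would appeal to \cite[Corollary 4.5]{CR06}, which tells us that $h$-identifiability of $X$ implies that a general $(h-1)$-tangential projection $\tau_{h-1}^X$ is birational; in other words, both sides of the ``iff'' in Conjecture~\ref{con:Bro_mio} are true for $X$ and $h$, so the biconditional holds trivially.

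There is effectively no obstacle to overcome here: the entire weight of the argument rests on Theorem~\ref{mainB}, and the corollary is just its packaging into the language of the Bronowski-style conjecture stated in Conjecture~\ref{con:Bro_mio}. The only thing I would be careful about in the writing is to note explicitly the convention on the degenerate Gauss map case, since otherwise the statement of the corollary looks stronger than what Theorem~\ref{mainB} gives (recall from the discussion following Theorem~\ref{mainB} that the non-degenerate Gauss map assumption cannot in general be dropped, as evidenced by Theorem~\ref{mainA}).
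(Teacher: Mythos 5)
Your proposal is correct and is essentially the paper's own argument: the paper states the corollary as a ``straightforward consequence'' of Theorem~\ref{mainB}, exactly as you do, with the open direction of the biconditional becoming vacuous once $h$-identifiability is established and the other direction supplied by \cite[Corollary 4.5]{CR06}. Your explicit handling of the degenerate Gauss map case (where Conjecture~\ref{con:Bro_mio} imposes no condition) is a sensible clarification consistent with the paper's reading.
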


The above result should be confronted with \cite[Theorem 1.5]{CK22} where the same statement is proved for Conjecture~\ref{CBro}, see also Remark~\ref{rem:Th15} for further details.

Theorem \ref{mainB} allows  to translate all known bounds for non secant
defectiveness into bounds for identifiability.   
As a sample we focus here on  Segre-Veronese varieties.

Let
$\pmb{n}=(n_1,\dots,n_r)$ and $\pmb{d} = (d_1,\dots,d_r)$ be two
$r$-uples of positive integers, with $n_1\leq \dots \leq n_r$ and
$N(\pmb{n},\pmb{d})=\prod_{i=1}^r\binom{n_i+d_i}{n_i}-1$. The
Segre-Veronese variety $SV_{\pmb{d}}^{\pmb{n}}$ is the image in
$N(\pmb{n},\pmb{d})$ of
$\mathbb{P}^{n_1}\times\dots\times\mathbb{P}^{n_r}$ via the embedding
induced by $ 
\mathcal{O}_{\mathbb{P}^{\pmb{n}} }(d_1,\dots,
d_r)=\mathcal{O}_{\mathbb{P}(V_1^{*})}(d_1)\boxtimes\dots\boxtimes
\mathcal{O}_{\mathbb{P}(V_1^{*})}(d_r)$. As a direct consequence of \cite[Theorem 1.1]{LMR20} and Theorem \ref{mainB} we have the following: 

\begin{thm}\label{mainC}
Let $SV_{\pmb{d}}^{\pmb{n}}\subset \mathbb{P}^{N(\pmb{n},\pmb{d})}$ be a Segre-Veronese variety. If 
$$h < \left\lfloor\frac{d_j}{n_j+d_j}\frac{1}{\sum_{i=1}^r n_i + 1}\prod_{i=1}^r \binom{n_i+d_i}{d_i}\right\rfloor,$$
where $\frac{n_j}{d_j} = \max_{1\leq i\leq r}\left\lbrace\frac{n_i}{d_i}\right\rbrace$, then $SV_{\pmb{d}}^{\pmb{n}}$ is $h$-identifiable.
\end{thm}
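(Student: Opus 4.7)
The plan is to read Theorem~\ref{mainC} as a direct combination of \cite[Theorem~1.1]{LMR20} with Theorem~\ref{mainB}, so the task reduces to checking that the three hypotheses of Theorem~\ref{mainB} are all implied by the stated numerical inequality on $h$. Throughout, set $X = SV_{\pmb{d}}^{\pmb{n}}$, $n = \dim X = \sum_{i=1}^r n_i$ and $N+1 = \prod_{i=1}^r \binom{n_i+d_i}{n_i}$, so that the hypothesis reads
$$h < \left\lfloor \frac{d_j}{n_j+d_j}\cdot\frac{N+1}{n+1}\right\rfloor,$$
or equivalently $h+1 \leq \left\lfloor \tfrac{d_j}{n_j+d_j}\cdot\tfrac{N+1}{n+1}\right\rfloor$.

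First I would dispose of the numerical hypothesis $(h+1)n + h \leq N$ of Theorem~\ref{mainB}. Rewriting it as $(h+1)(n+1) \leq N+1$, it suffices to note that $d_j/(n_j + d_j) \leq 1$, so
$$h+1 \;\leq\; \left\lfloor \frac{d_j}{n_j+d_j}\cdot\frac{N+1}{n+1}\right\rfloor \;\leq\; \frac{N+1}{n+1},$$
which is exactly the required inequality.

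Next I would verify that $X$ has non-degenerate Gauss map. Segre–Veronese varieties are smooth, linearly normal, and not developable, being embedded by a complete linear system of an ample line bundle on a rational homogeneous space; in particular their Gauss map is generically finite (indeed birational onto its image). This is classical and standard, so the hypothesis is immediate.

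The remaining ingredient is the non-$(h+1)$-defectiveness of $X$, which is precisely what \cite[Theorem~1.1]{LMR20} provides: it ensures that $SV_{\pmb{d}}^{\pmb{n}}$ is not $k$-defective as long as $k \leq \left\lfloor \tfrac{d_j}{n_j+d_j}\cdot\tfrac{N+1}{n+1}\right\rfloor$, and our inequality gives exactly this bound for $k = h+1$. With all three hypotheses of Theorem~\ref{mainB} established, the $h$-identifiability of $X$ follows. The only real care point is matching the exact form of \cite[Theorem~1.1]{LMR20}—in particular the strict-vs-non-strict inequality on $k$—with the strict inequality on $h$ written here; this has to be aligned, but the arithmetic is immediate, as shown above.
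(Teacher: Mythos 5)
Your proposal is correct and follows essentially the same route as the paper: the authors also deduce Theorem~\ref{mainC} by combining the non-defectiveness bound of \cite[Theorem 1.1]{LMR20} with Theorem~\ref{main2} (equivalently Theorem~\ref{mainB}), using smoothness of Segre--Veronese varieties to rule out $1$-tangential weak defectiveness, i.e.\ degeneracy of the Gauss map. Your explicit verification of the numerical condition $(h+1)n+h\leq N$ and of the shift from $h$ to $h+1$ in the defectiveness bound is exactly the arithmetic the paper leaves implicit.
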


The expected generic rank for Segre-Veronese varieties is given by a
polynomial of degree $\sum_i n_i$ in the $d_i$ and of degree $\sum_i
d_i - 1$ in the $n_i$. Theorem \ref{mainC} gives a polynomial bound of
degree $\sum_i n_i$ in the $d_i$, while in the $n_i$ we have a
polynomial bound of degree $\sum_id_i-2$. At the best of our knowledge
so far this is the best general bound, without any technical
assumptions on the involved parameters, for identifiability of
Segre-Veronese varieties. Pushing a bit further the result in
\cite{LP13}  we are able to
match the best possible bound for products of lines, extending to the
Segre-Veronese setting \cite[Theorem 26]{CM19}:

\begin{thm}\label{SVP1}
Let $SV_{\pmb{d}}^{(1,\dots,1)}\subset \mathbb{P}^{N(\pmb{n},\pmb{d})}$ be a Segre-Veronese variety with $\pmb{n} = (1,\dots,1)$. If 
$$h < \left\lfloor\frac{\prod_{i=1}^r(d_i+1)}{r+1}\right\rfloor$$
then $SV_{\pmb{d}}^{(1,\dots,1)}$ is $h$-identifiable.
In particular when $\frac{\prod_{i=1}^r(d_i+1)}{r+1}$ is an integer and $d_i\geq 3$, the variety $SV_{\pmb{d}}^{(1,\dots,1)}$ is $h$-identifiable for all subgeneric ranks and it is not generically identifiable.
\end{thm}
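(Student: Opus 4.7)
The plan is to deduce the first statement from Theorem \ref{mainB}. Setting $X := SV_{\pmb{d}}^{(1,\dots,1)}$, we have $\dim X = n = r$ and $N = N(\pmb{n},\pmb{d}) = \prod_{i=1}^{r}(d_i+1) - 1$. The numerical hypothesis $(h+1)n + h \le N$ becomes $(r+1)(h+1) \le \prod_{i=1}^{r}(d_i+1)$, which is equivalent to $h+1 \le \lfloor \prod_{i=1}^r(d_i+1)/(r+1) \rfloor$, precisely the bound assumed on $h$. Since $X$ is smooth and homogeneous under the natural action of $\prod_i \mathrm{GL}_2$, its Gauss map is non-degenerate. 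It therefore remains to verify that $X$ is not $(h+1)$-defective; the $h$-identifiability will then follow from Theorem \ref{mainB}.

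Non-defectiveness is the technical heart of the argument and is obtained by extending the inductive Terracini-type scheme of \cite{LP13} (tailored to the case $d_1 = \cdots = d_r = 1$) to arbitrary $\pmb{d}$. By Terracini's lemma one must show that the union of the affine tangent cones to $X$ at $h+1$ general points has the expected codimension $N - (h+1)(r+1) + 1$. The plan is to argue by induction on $r$ (and on $\sum d_i$), specializing a controlled number of the chosen points onto a divisor $H \subset X$ of the form $\{p\} \times \mathbb{P}^1 \times \cdots \times \mathbb{P}^1$, whose image under the Segre-Veronese embedding is again a Segre-Veronese variety with one fewer factor, and controlling the associated fat-point scheme $Z$ via the Castelnuovo exact sequence
\[
0 \to \mathcal{I}_{Z}\otimes \mathcal{O}_X(-H) \to \mathcal{I}_{Z} \to \mathcal{I}_{Z \cap H,\, H} \to 0.
\]
The main obstacle is to keep the arithmetic tight enough that the denominator $r+1$ in the bound is attained at each step: this is exactly the place where one pushes beyond \cite{LP13}, by a careful balanced distribution of the specialized points between $H$ and its residual divisor so that both the trace on $H$ and the residue on $X$ satisfy the inductive hypothesis without slack.

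For the "in particular" assertion, set $g := \prod_{i=1}^r(d_i+1)/(r+1) \in \mathbb{Z}$ and assume $d_i \ge 3$. Non-defectiveness at rank $g$ yields $\dim \sec_g(X) = g(r+1)-1 = N$, so $\sec_g(X) = \mathbb{P}^N$ and the generic $X$-rank is exactly $g$. Identifiability for every $h < g$ is covered by the first part. To prove non-identifiability at $h = g$ it suffices to show that the abstract secant map
\[
\Psi\colon X^{(g)} \map \mathbb{P}^N
\]
has degree at least two. Via the Chiantini-Ciliberto criterion \cite{CC02} (see also \cite{CM19}), this amounts to exhibiting a positive-dimensional $(g-1)$-tangential contact locus of $X$. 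The hypothesis $d_i \ge 3$ is used precisely at this step: each factor $v_{d_i}(\mathbb{P}^1)$ is a rational normal curve of degree at least three, hence carries enough parameter families of effective divisors of any given degree that a general decomposition $p = [x_1]+\cdots+[x_g]$ can be deformed to a one-parameter family of alternative decompositions, producing a contact locus of positive dimension and thus forcing $\deg \Psi \ge 2$.
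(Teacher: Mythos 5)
Your reduction to Theorem \ref{mainB} is the right frame, but the key step --- that $X=SV_{\pmb{d}}^{(1,\dots,1)}$ is never $(h+1)$-defective in the stated range --- is both unproved (you give only a plan for a Castelnuovo-type induction, which moreover re-derives a result already available in full generality in \cite[Theorem 3.1]{LP13}) and, more seriously, \emph{false} at the boundary. By \cite[Theorem 3.1]{LP13} the products of lines \emph{are} defective exactly for $(d_1,\dots,d_r;h)\in\{(2,2a;2a+1),(1,1,2a;2a+1),(2,2,2;7),(1,1,1,1;3)\}$. For $\pmb{d}=(2,2a)$ your bound allows $h=2a$, and then $h+1=2a+1$ is a defective rank, so the hypothesis of Theorem \ref{mainB} fails; the same happens for $(1,1,2a)$ at $h=2a$ and for $(1,1,1,1)$ at $h=2$. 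These top cases are not corner trivia --- they are precisely where the theorem claims identifiability up to one below the generic rank --- and the paper has to treat them by a separate argument: using the explicit description of the linear systems with double points from the proof of \cite[Proposition 7.1]{LP13} it shows these varieties are not $h$-tangentially weakly defective, which suffices for $h$-identifiability. Your proposal has no substitute for this step.

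The ``in particular'' part also has a logical gap. Exhibiting a positive-dimensional $(g-1)$-tangential contact locus does \emph{not} force the secant map to have degree at least two: tangential weak defectiveness is necessary but not sufficient for non-identifiability (the paper itself recalls that $\mathbb{G}(2,7)$ is $3$-tangentially weakly defective yet $3$-identifiable). So your deformation-of-decompositions heuristic, even if made precise, would only produce a contact locus, not a second decomposition through a general point. The paper instead invokes \cite[Theorem 30]{CM21}, which directly rules out generic identifiability of $SV_{\pmb{d}}^{(1,\dots,1)}$ when $d_i\geq 3$. To repair your argument you would need either that citation or an actual construction of two distinct $g$-point decompositions of a general tensor.
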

We stress that Theorem \ref{SVP1} gives the generic identifiability of all
sub-generic binary mixed tensors, qubits in the quantum information
dictionary,  in the perfect case, that is when
$\frac{\prod_{i=1}^r(d_i+1)}{r+1}$ is an integer, and all but the last
one in general.

The analysis in Theorem \ref{SVP1} allowed us to find out a new not
identifiable Segre-Veronese variety.
The variety
$SV_{(2,2,2)}^{(1,1,1)}\subset \mathbb{P}^{26}$ is $h$-identifiable
for $h\leq 5$, by Theorem~\ref{SVP1}. In Remark \ref{SVn6id} we
show that it is not $6$-identifiable since through a general point of
$\sec_6(SV_{(2,2,2)}^{(1,1,1)})$ there are exactly two $5$-planes
intersecting $SV_{(2,2,2)}^{(1,1,1)}$ in six points.  

\subsection*{Organization of the paper}
All through the paper we work over an algebraically closed field $k$
of characteristic zero. In Section \ref{sec2} we prove Theorem
\ref{mainA}. In Section \ref{sec3} we plug the Bronowski's conjecture
into the relation between secant defectiveness and identifiability and
prove Theorem \ref{mainB}. Finally, in Section \ref{sec4} we apply
Theorem \ref{mainC} to derive bounds for identifiability from known
bounds for non secant defectiveness.

\section{Counterexamples to the Bronowski's conjecture}\label{sec2}

Let $X\subset\mathbb{P}^N$ be an irreducible and non-degenerate 
variety of dimension $n$ and let $\Gamma_h(X)\subset X\times \dots
\times X\times\G(h-1,N)$, where $h\leq N-n+1$, be the closure of the graph
of the rational map $\alpha: X\times\dots\times X \dasharrow
\G(h-1,N)$ taking $h$ general points to their linear span. Observe that since $h\leq N-n+1$ an $(h-1)$-plane spanned by $h$ general points of $X$ intersect $X$ in a finite number of points. So $\alpha$ is generically finite and $\Gamma_h(X)$ is irreducible
and reduced of dimension $hn$. Let $\pi_2:\Gamma_h(X)\to\G(h-1,N)$ be
the natural projection, and
$\mathcal{S}_h(X):=\pi_2(\Gamma_h(X))\subset\G(h-1,N)$. Again
$\mathcal{S}_h(X)$ is irreducible and reduced of dimension
$hn$. Finally, consider
$$\mathcal{I}_h=\{(x,\Lambda) \: | \: x\in \Lambda\}\subset\mathbb{P}^N\times\G(h-1,N)$$
with the projections $\pi_h^X$ and $\psi_h^X$ onto the factors. The abstract $h$-secant variety is the irreducible variety
$$\Sec_{h}(X):=(\psi_h^X)^{-1}(\mathcal{S}_h(X))\subset \mathcal{I}_h.$$
The $h$-secant variety is defined as
$$\sec_{h}(X):=\pi_h^X(\Sec_{h}(X))\subset\mathbb{P}^N.$$
It immediately follows that $\Sec_{h}(X)$ is an $(hn+h-1)$-dimensional
variety with a $\mathbb{P}^{h-1}$-bundle structure over the open subset of
$\mathcal{S}_h(X)$ parametrizing independent points. We say that $X$ is $h$-defective if
$\dim\sec_{h}(X)<\min\{\dim\Sec_{h}(X),N\}$.  

\begin{Definition}
When $\pi_h^X:\Sec_h(X)\rightarrow\sec_h(X)$ is generically finite (note that this is equivalent to say that $X$ is not $h$-defective) we will call
its degree the $h$-secant degree of $X$, and we will say that $X$ is
$h$-identifiable when its $h$-secant degree is one.  
\end{Definition}

In the study of identifiability a central notion is that of tangential weak defectiveness.
\begin{Definition}\cite{BBC}
Let $x_1,\dots,x_h\in X$ be general points. The $h$-tangential contact locus $\Gamma_{x_1,\dots,x_h}$ of $X$ with respect to $x_1,\dots,x_h$ is the closure in $X$ of
the union of all the irreducible components which contain at least one
of the $x_i$ of the locus of points of $X$ where $\left\langle
  T_{x_1}X,\dots, T_{x_h}X\right\rangle$ is tangent to $X$. Let
$\gamma_{x_1,\dots,x_h}$ be the largest dimension of the components of
$\Gamma_{x_1,\dots,x_h}$. If $\gamma_{x_1,\dots,x_h} > 0$ we say that
$X$ is $h$-tangentially weakly defective.
\end{Definition}
  
\begin{Remark} Note that a variety $X$ is 1-tangentially weakly defective
  if and only if its Gauss map is degenerate. 
By \cite[Proposition 14]{CM19}, see also  \cite[Proposition 2.4]{CO12}
for a similar result, if $X$ is not $h$-tangentially weakly defective
then it is $h$-identifiable.
It is important to stress that the two notions are not equivalent.  The Grassmannian
$\mathbb{G}(2,7)$ parametrizing planes in $\mathbb{P}^7$ is
$3$-tangentially weakly defective but it is $3$-identifiable
\cite[Proposition 1.7]{BV18}, see also Remark~\ref{rem:curv_no_bro}.
\end{Remark}

\begin{Notation}
Let $X\subset\p^N$ be an irreducible and non-degenerate variety. A general $h$-tangential projection of $X$ is a linear projection $\tau_{x_1,\dots,x_h}^X:X\dasharrow X_{h}\subset\mathbb{P}^{N_{h}}$ from the linear span of $h$ tangent
spaces $\left\langle T_{x_1}X,\dots, T_{x_h}X\right\rangle$ where
$x_1,\dots,x_h\in X$ are general points.

When there will be no danger of confusion we will denote a general $h$-tangential projection $\tau_{x_1,\dots,x_h}^X$ simply by $\tau_{h}^X$, a general $h$-tangential contact locus $\Gamma_{x_1,\dots,x_h}$ by $\Gamma_{h}$ and its dimension by $\gamma_h$. 
\end{Notation}

\begin{Remark}
It is easy to see that if $nh+h-1\leq N$ then $X$ is $h$-defective if and only if a general $(h-1)$-tangential projection of $X$ has fibers of positive dimension \cite[Proposition 3.5]{CC02}, see
Lemma~\ref{taupi} for further relations.
\end{Remark}

\begin{Definition}
Let $X\subset\mathbb{P}^N$ be an irreducible, non-degenerate and non $1$-tangentially weakly defective
variety. We say that $X$ is an $h$-Bronowski's variety if it satisfies Conjecture~\ref{con:Bro_mio} for the integer $h$ that is if either $\tau^X_h$ is not birational or $X$ is $h$-identifiable.
\end{Definition}

Note that, by \cite[Corollary 4.5]{CR06}, if a variety is $h$-identifiable then it is $h$-Bronowski. In \cite{CK22} the authors proved Conjecture~\ref{CBro} for curves. Here we propose a slightly modified proof, that led us to built the counterexamples described in Theorem~\ref{mainA}. 

All through the paper when we say that a map is of fiber type we mean that the general fiber of the map has positive dimension. 

\begin{Proposition}\label{Bro_cur}\cite[Theorem 1.5]{CK22}
Let $X\subset\mathbb{P}^N$ be an irreducible and non-degenerate
curve. If $N\geq 3$ then $X$ is an $h$-Bronowski's variety for $h\leq
\lfloor\frac{N}{2}\rfloor +1$.  
\end{Proposition}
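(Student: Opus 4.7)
The plan is to prove both directions of the equivalence by induction on $h$. The forward implication—that $h$-identifiability forces birationality of the general $(h-1)$-tangential projection—holds for any variety by \cite[Corollary 4.5]{CR06}, so only the converse is at issue. The base case $h = 1$ is immediate, since every non-degenerate curve is trivially $1$-identifiable and $\tau_0^X$ is the identity.

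For the inductive step, assume the proposition holds for all smaller $h$ and let $X\subset\mathbb{P}^N$ be a non-degenerate curve with $h\leq\lfloor N/2\rfloor+1$ and $\tau^X_{h-1}$ birational. Pick a general $\zeta\in X$; the factorization $\tau^X_{h-1}=\tau^{X_1}_{h-2}\circ\tau_\zeta$ through the one-point tangential projection $\tau_\zeta:X\dasharrow X_1\subset\mathbb{P}^{N-2}$ makes both $\tau_\zeta$ and $\tau^{X_1}_{h-2}$ birational. Since $h-1\leq\lfloor(N-2)/2\rfloor+1$, the inductive hypothesis applies to $X_1$ and yields that $X_1$ is $(h-1)$-identifiable.

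Suppose for contradiction that $X$ is not $h$-identifiable, so a general $p\in\sec_h(X)$ lies on two distinct $(h-1)$-secants $L=\langle x_1,\dots,x_h\rangle$ and $L'=\langle y_1,\dots,y_h\rangle$. Taking $\zeta=x_1$, the projection $\tau_\zeta$ sends $L$ to an $(h-2)$-secant $M=\langle\tau_\zeta(x_2),\dots,\tau_\zeta(x_h)\rangle$ of $X_1$ through $\tau_\zeta(p)$, and sends $L'$ to an $(h-1)$-plane $\Pi=\langle\tau_\zeta(y_1),\dots,\tau_\zeta(y_h)\rangle$ also containing $\tau_\zeta(p)$. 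A dimension count shows $\tau_\zeta(p)$ is a general point of $\sec_{h-1}(X_1)$, so by $(h-1)$-identifiability $M$ is its unique $(h-2)$-secant in $X_1$. The crux is to exhibit a second $(h-2)$-secant of $X_1$ through $\tau_\zeta(p)$ lying inside $\Pi$: uniqueness then forces it to coincide with $M$, so $\{\tau_\zeta(x_i)\}_{i\geq 2}\subset\{\tau_\zeta(y_j)\}_j$, and by birationality of $\tau_\zeta$, $\{x_2,\dots,x_h\}\subset\{y_1,\dots,y_h\}$. A direct linear-algebra argument then confines $p$ to the $(h-2)$-plane $\langle x_2,\dots,x_h\rangle$, contradicting the genericity of $p$ on the $(2h-1)$-dimensional variety $\sec_h(X)$.

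The main obstacle is exactly the crux step: producing the second $(h-2)$-secant of $X_1$ inside $\Pi$ through $\tau_\zeta(p)$. The idea is to move $p$ in a one-parameter family along $L$, producing a corresponding one-parameter family of secants $L'(p)$ through $p$, hence a family of $(h-1)$-planes $\Pi(p)$ all passing through the moving point $\tau_\zeta(p)\in M$. A specialization argument inside this family—using the irreducibility of the fiber of $\pi_h^X$ over $p$ and the non-degeneracy of the Gauss map of $X_1$—should yield a limit $(h-2)$-subsecant of $X_1$ in $\Pi$ passing through $\tau_\zeta(p)$.
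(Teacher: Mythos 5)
The inductive framework (factor $\tau_{h-1}^X$ through a one-point tangential projection, apply the hypothesis to $X_1$) is set up correctly, but the step you yourself flag as the crux is a genuine gap, and in the form you state it is not just unproven but generically false. When $L'\cap T_{x_1}X=\emptyset$ (the expected situation), $\tau_\zeta$ restricts to a linear isomorphism $L'\to\Pi$, so $\tau_\zeta(p)$ is a \emph{general} point of the $(h-1)$-plane $\Pi$ relative to the $h$ points $\tau_\zeta(y_1),\dots,\tau_\zeta(y_h)$; a general point of $\Pi$ lies on none of the hyperplanes of $\Pi$ spanned by $h-1$ of those points, so there is no $(h-2)$-secant of $X_1$ through $\tau_\zeta(p)$ inside $\Pi$ of the kind your uniqueness argument needs. (If instead such a secant used other points of $X_1\cap\Pi$, the identification of its spanning set with a subset of the $\tau_\zeta(y_j)$, and hence the pullback to $\{x_2,\dots,x_h\}\subset\{y_1,\dots,y_h\}$, would fail.) Your proposed remedy --- moving $p$ along $L$ and specializing the family $\Pi(p)$ --- is precisely the problem of controlling how the residual secant degenerates under a tangential projection, which is the known obstruction to Bronowski's conjecture in general and the very point where the counterexamples of Section 2 of this paper live; ``should yield'' carries the entire burden, and the sketch gives no mechanism forcing the limit plane to contain a span of only $h-1$ points of $X_1$ through $\tau_\zeta(p)$.

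The paper's proof takes a different and non-inductive route that never tracks the second secant. After reductions one may assume $\deg X=d>N$, $\sec_h(X)=\mathbb{P}^N$, $2h-1=N$ and $X$ not $(h-1)$-defective; it then suffices to show that the center $\langle T_{x_1}X,\dots,T_{x_{h-1}}X\rangle$ meets $X$ in at most $d-2$ points counted with multiplicity, since a general hyperplane through the center then meets $X$ in at least two further points and $\tau_{h-1}^X$ has degree at least two. That intersection bound is reduced, by projecting from $h-2$ of the tangent lines, to the statement that a general tangent line of a non-degenerate curve of degree $d$ in $\mathbb{P}^3$ meets it with multiplicity at most $d-2$, proved via Gruson--Lazarsfeld--Peskine (which forces the curve to be smooth if the bound fails) and Kaji's theorem on tangent lines of smooth curves. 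Any repair of your induction would still need an input of this intersection-theoretic kind at the bottom; the linear-algebra bookkeeping alone cannot supply it, and indeed must fail for some higher-dimensional $X$ since the statement itself does.
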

\begin{proof}
Set $d = \deg(X)$. Since $X$ is non-degenerate we have $d\geq N$. If
$d = N$ then $X$ is smooth and hence it is a rational normal curve. In
this case $X$ is $h$-identifiable and hence it is $h$-Bronowski.

Assume now that $d > N$ and $X$ is not $h$-identifiable. If $\sec_h(X) \neq \mathbb{P}^N$ let $p\in\sec_h(X)$ be a general
point. Then, by the Terracini's lemma \cite{Te11} $X\subset
T_p\sec_h(X)$ is degenerate. Therefore, we may assume that $\sec_h(X)
= \mathbb{P}^N$. Furthermore, if $2h-1 > N$ then a general
$(h-1)$-tangential projection is of fiber type so we may assume that
$2h-1 = N$. Let $x_1,\dots,x_{h-1}\in X$ be general points and $p$ a
general point of the span $\left\langle
  x_1,\dots,x_{h-1}\right\rangle$. If $T_p\sec_{h-1}(X)$ has dimension
smaller that $2h-3$ then by the Terracini's lemma \cite{Te11} $X$ is
$(h-1)$-defective and hence, again by the Terracini's lemma
\cite{Te11}, $X\subset T_p\sec_{h-1}(X)$ is degenerate. So we may
assume that $T_p\sec_{h-1}(X)$ has dimension $2h-3$.  

Assume that $T_p\sec_{h-1}(X) = \left\langle T_{x_1}X,\dots,T_{x_{h-1}}X\right\rangle$ intersects $X$ in at most $d-2$ points counted with multiplicity. Then a general hyperplane containing
$T_p\sec_{h-1}(X)$ intersects $X$ in at least $d-(d-2)$ additional
points, and hence $\tau_{x_1,\dots,x_{h-1}}^X$ is not birational.        

Therefore, to conclude it is enough to prove that $T_p\sec_{h-1}(X)$
intersects $X$ in at most $d-2$ points counted with multiplicity. Up
to projecting from $\left\langle T_{x_1}X,\dots,T_{x_{h-2}}X\right\rangle$ we may assume that
$X\subset\mathbb{P}^3$ and $h = 2$. Assume that a general tangent line
$T_xX$ of $X$ intersects $X$ in at least $d-1$ points counted with
multiplicity and assume the existence of surface $Y\subset\mathbb{P}^3$ of degree
$d-2$ containing $X$. Then $T_{x}X\subset Y$ for all $x\in X$ and so
$Y$ contains the tangential variety of $X$. Therefore, either there is no surface of degree $d-2$ containing $X$ or such a surface 
must have the tangential variety of $X$ as an irreducible component. In particular, $X$ can not be
cut out by surfaces of degree $d-2$. So, in the terminology of
\cite{GLP83}, $X$ is not $C_{d-2}$ and by \cite[Corollary in the
Introduction]{GLP83} $X$ is smooth. Finally, since $X$ is
non-degenerate \cite[Theorem 3.1]{Kaj86} yields that $T_xX$ intersects
$X$ only at $x$. Since we are in the case $d > N$ we have $d>3$ and this is enough to conclude. 
\end{proof}

\begin{Remark}
Let $X$ be a variety with $h$-secant degree $d_h > 1$. Then $X$ is $h$-Bronowski if and only if a general $(h-1)$-tangential projection $\tau_{h-1}^X$ is not birational. Note that in general the degree of $\tau_{h-1}^X$ can be much less than $d_h$ as the following example shows. Let $X\subset\mathbb{P}^3$ be a general projection of a rational normal curve $C\subset\mathbb{P}^d$. A straightforward computation shows that a general tangent line of $X$ intersects $X$ only at the tangency point with multiplicity two. On the other hand, through a general point of $\mathbb{P}^3$ there are $\frac{(d-1)(d-2)}{2}$ lines that are secant to $X$. Finally, $d_2 = \frac{(d-1)(d-2)}{2} > d-2$ for $d > 3$.      
\end{Remark}

\begin{Remark}\label{rem:curv_no_bro}
Proposition \ref{Bro_cur} does not hold for reducible curves. For
instance, let $X = L_1\cup L_2\cup R_1\cup R_2\subset\mathbb{P}^3$ be
the union of four lines such that $L_i$ intersects $R_1,R_2$ in a
point for $i = 1,2$ and $L_1\cap L_2 = R_1\cap R_2 = \emptyset$. Let
$p\in \mathbb{P}^3$ be a general point. There are two lines passing through $p$ and intersecting $X$ in two points, one for the pair of lines $L_1,L_2$ and the other for the pair of
lines $R_1,R_2$. Hence the curve $X$ is not $2$-identifiable.

Now, take a point $x\in X$ which is a general point of one of its irreducible components say
$L_1$. Then the tangential projection $\tau_x^X$ is not defined on
$L_1$, contracts $R_1,R_2$ and maps $L_2$ birationally onto
$\mathbb{P}^1$. So $\tau_x^X:X\dasharrow\mathbb{P}^1$ is birational.

This should be confronted with the examples of varieties that are
$h$-tangentially weakly defective and $h$-identifiable. In all the known examples with this behavior the tangential contact locus has a geometry similar to that of the reducible curve $X$.
\end{Remark}

We are ready to build the announced counterexamples to the Bronowski's conjecture. We start with an elementary result which is probably classical though we did not find any reference.

\begin{Lemma}\label{lspg}
Let $\Lambda_1,\dots,\Lambda_h\subset\mathbb{P}^{hr-1}$ be
$(r-1)$-planes whose linear span is the whole ambient
projective space $\mathbb{P}^{hr-1}$, and let $p\in \mathbb{P}^{hr-1}$
be a general point. There is a unique $(h-1)$-plane $\Lambda^{h-1}$
such that $p\in\Lambda^{h-1}$ and $\Lambda^{h-1}$ intersects
$\Lambda_i$ in a point for all $i = 1,\dots,h$.   
\end{Lemma}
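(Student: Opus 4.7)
The plan is to reduce the statement to a linear algebra assertion about direct sum decompositions. Since $\dim\Lambda_i=r-1$ for each $i$ and $\sum_{i=1}^h(r-1+1)=hr=\dim \mathbb{C}^{hr}$, the hypothesis that the $\Lambda_i$ span $\mathbb{P}^{hr-1}$ is equivalent to saying that the affine cones $V_i\subset\mathbb{C}^{hr}$ over the $\Lambda_i$ give a direct sum decomposition
\[
\mathbb{C}^{hr}=V_1\oplus V_2\oplus\cdots\oplus V_h.
\]
This is the only nontrivial input, and it is forced purely by dimension count.

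With this decomposition in hand I would construct the desired $\Lambda^{h-1}$ explicitly. Choose a lift $\tilde p\in\mathbb{C}^{hr}\setminus\{0\}$ of $p$ and write uniquely
\[
\tilde p=p_1+p_2+\cdots+p_h,\qquad p_i\in V_i.
\]
For $p$ general all the projections $p_i$ are nonzero, so each determines a point $[p_i]\in\Lambda_i$. Since $p_1,\dots,p_h$ lie in complementary subspaces and are all nonzero, they are linearly independent in $\mathbb{C}^{hr}$; hence $\Lambda^{h-1}:=\langle [p_1],\dots,[p_h]\rangle$ is genuinely an $(h-1)$-plane. By construction it contains $[p]=[\tilde p]$ and meets each $\Lambda_i$ at $[p_i]$.

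For uniqueness, suppose $\Lambda'$ is another $(h-1)$-plane through $p$ meeting $\Lambda_i$ at a point $q_i$ for every $i$. For $p$ general, the points $q_1,\dots,q_h$ must be linearly independent (else $\Lambda'$ would have dimension $<h-1$, while $p$ lies in it), so $\Lambda'=\langle q_1,\dots,q_h\rangle$ and $\tilde p=\sum c_i\tilde q_i$ for some scalars $c_i$, with $\tilde q_i\in V_i$ a lift of $q_i$. The uniqueness of the direct sum decomposition $\mathbb{C}^{hr}=\bigoplus V_i$ forces $c_i\tilde q_i=p_i$ for every $i$. Since $p_i\neq 0$, we get $c_i\neq 0$, hence $[q_i]=[p_i]$ and $\Lambda'=\Lambda^{h-1}$.

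The only subtlety is bookkeeping the genericity of $p$: we use it twice, first to guarantee $p_i\neq 0$ for each $i$ (equivalently, $\tilde p$ avoids the hyperplanes $\bigoplus_{j\neq i}V_j$), and second to guarantee that any $h$ points $q_i\in\Lambda_i$ whose span contains $p$ are linearly independent. Both are Zariski open conditions on $p$, so neither is a real obstacle; the argument is essentially a uniqueness-of-decomposition statement.
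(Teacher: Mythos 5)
Your proof is correct, and it takes a genuinely different route from the paper's. The paper argues synthetically by induction on $h$: for $h=2$ it uses the classical fact that $\langle p,\Lambda_1\rangle$ is an $r$-plane meeting $\Lambda_2$ in a single point $q$, so that $\langle p,q\rangle$ is the transversal; for general $h$ it projects $p$ into $\langle\Lambda_1,\dots,\Lambda_{h-1}\rangle$ through the point $q=\langle p,\Lambda_1,\dots,\Lambda_{h-1}\rangle\cap\Lambda_h$ and invokes the inductive hypothesis, with a separate argument for uniqueness via the span of two hypothetical solutions. You instead observe that the spanning hypothesis together with the dimension count forces the affine cones to satisfy $V_1\oplus\cdots\oplus V_h=k^{hr}$, after which both existence and uniqueness drop out of the uniqueness of the decomposition $\tilde p=p_1+\cdots+p_h$. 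Your argument is shorter, treats existence and uniqueness uniformly, and isolates exactly where genericity of $p$ enters (only to guarantee $p_i\neq 0$ for all $i$, i.e.\ that $p$ avoids the $h$ linear subspaces $\mathbb{P}(\bigoplus_{j\neq i}V_j)$); the paper's induction is closer in spirit to the incidence-geometric flavor of the rest of Section 2. One small correction to your writeup: the parenthetical reason you give for the linear independence of $q_1,\dots,q_h$ (``else $\Lambda'$ would have dimension $<h-1$'') is not the right justification, since $\Lambda'$ is assumed to be an $(h-1)$-plane regardless; but the independence is automatic and needs no genericity at all, because nonzero vectors $\tilde q_i\in V_i$ lying in the summands of a direct sum are always linearly independent. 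With that one-line fix the argument is complete.
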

\begin{proof}
Consider the case $h = 2$. Then $\Lambda = \left\langle p,
  \Lambda_1\right\rangle$ is an $r$-plane and since
$\Lambda,\Lambda_2\subset\mathbb{P}^{2r-1}$ we have that
$\Lambda\cap\Lambda_2 = \{q\}$. Since the line $\left\langle p,
  q\right\rangle$ and $\Lambda_1$ are contained in $\Lambda$ they
intersect in a point and hence $\Lambda^{h-1} = \left\langle p,q\right\rangle$ is the line we are looking for. 

Now, consider the general case. Set $\Lambda = \left\langle p,\Lambda_1,\dots,\Lambda_{h-1}\right\rangle$. Then $\Lambda$ is an $(r(h-1))$-plane containing the $(r(h-1)-1)$-plane
$\left\langle\Lambda_1,\dots,\Lambda_{h-1}\right\rangle$. Hence
$\Lambda$ intersects $\Lambda_h$ in a point $q$, and the line
$\left\langle p,q\right\rangle$ intersects
$\left\langle\Lambda_1,\dots,\Lambda_{h-1}\right\rangle$ in a point
$p'$. By induction on $h$ there is an $(h-2)$-plane $\Lambda^{h-2}\subset
\left\langle\Lambda_1,\dots,\Lambda_{h-1}\right\rangle$ passing
through $p'$ and intersecting $\Lambda_1,\dots,\Lambda_{h-1}$. So the
$(h-1)$-plane $\Lambda^{h-1} = \left\langle q,\Lambda^{h-2}\right\rangle$ intersects
$\Lambda_1,\dots,\Lambda_{h}$ and since $\left\langle p,q\right\rangle\subset \Lambda^{h-1}$ it passes through $p$.  

Assume that there were two distinct $(h-1)$-planes $\Lambda^{h-1},
\Gamma^{h-1}$ through $p$ and intersecting $\Lambda_i$ in a point for all $i = 1,\dots,h$. Then $H = \left\langle \Lambda^{h-1}, \Gamma^{h-1}\right\rangle$ would be a linear subspace of dimension
at most $2(h-1)$ intersecting at least one of the $\Lambda_i$ along
a line. Hence $\Lambda_1,\dots,\Lambda_h$ could not span the whole ambient projective space $\mathbb{P}^{hr-1}$.  
\end{proof}

\begin{Proposition}\label{secnoId}
Let $Y\subset\mathbb{P}^N$ be an irreducible and non-degenerate variety and set $X = \sec_r(Y)$. Then $X$ is not $h$-identifiable for all $h\geq 2$.
\end{Proposition}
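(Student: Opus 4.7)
The plan is to pass through $Y$ and exploit the fact that every $(r-1)$-plane $r$-secant to $Y$ is contained in $X=\sec_r(Y)$. I would first verify the equality $\sec_h(X)=\sec_h(\sec_r(Y))=\sec_{hr}(Y)$: any point in the span of $h$ elements of $\sec_r(Y)$ lies in the span of some $hr$ elements of $Y$, and conversely any combination of $hr$ points of $Y$ can be regrouped into $h$ blocks of $r$, each spanning a point of $X$. Hence a general $p\in\sec_h(X)$ lies in the span of $hr$ general points $y_{ij}\in Y$, $1\le i\le h$, $1\le j\le r$, which generically span a $\mathbb{P}^{hr-1}\subset\mathbb{P}^N$.

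Next, for every partition $\mathcal P$ of the index set $\{(i,j)\}$ into $h$ blocks $G^{\mathcal P}_1,\dots,G^{\mathcal P}_h$ of size $r$, the span $\Lambda^{\mathcal P}_k:=\langle y_{ij}:(i,j)\in G^{\mathcal P}_k\rangle$ is an $(r-1)$-plane entirely contained in $X$, and the $h$ planes $\Lambda^{\mathcal P}_1,\dots,\Lambda^{\mathcal P}_h$ still span $\mathbb{P}^{hr-1}$. Lemma \ref{lspg} then produces a unique $(h-1)$-plane $\Pi_{\mathcal P}$ through $p$ meeting every $\Lambda^{\mathcal P}_k$ in a point; since those intersection points lie in $\Lambda^{\mathcal P}_k\subset X$, the plane $\Pi_{\mathcal P}$ is $h$-secant to $X$ and contains $p$. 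For $h\ge 2$ and $r\ge 2$ there always exist at least two distinct such partitions (swap a single element between two blocks), so this construction already produces at least two candidate $h$-secant $(h-1)$-planes through $p$.

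The one step that requires genuine care is the claim that distinct partitions yield distinct planes. If $\mathcal P\ne\mathcal P'$ and $\Pi_{\mathcal P}=\Pi_{\mathcal P'}$, then a single $(h-1)$-plane through $p$ would meet every $\Lambda^{\mathcal P}_k$ \emph{and} every $\Lambda^{\mathcal P'}_\ell$, giving strictly more incidence than Lemma \ref{lspg} already uses to pin $\Pi_{\mathcal P}$ down uniquely. A Schubert-type dimension count on the Grassmannian of $(h-1)$-planes through $p$ in $\mathbb{P}^{hr-1}$ shows that any extra incidence with a generic $(r-1)$-plane imposes codimension $(h-1)(r-1)\ge 1$, so a general choice of the $y_{ij}$ excludes such unexpected incidences; in low dimensions the claim can also be checked directly in coordinates (for $h=r=2$, no line through a general $p\in\mathbb{P}^3$ is a common transversal to the four lines $\langle y_1,y_2\rangle,\langle y_3,y_4\rangle,\langle y_1,y_3\rangle,\langle y_2,y_4\rangle$). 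Once distinctness is secured, the general point $p\in\sec_h(X)$ lies on at least two $(h-1)$-planes spanned by $h$ points of $X$, so $X$ is not $h$-identifiable.
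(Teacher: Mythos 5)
Your argument is essentially the paper's own proof: both express a general $p\in\sec_h(X)$ via $hr$ general points of $Y$, regroup them into different partitions into $h$ blocks of $r$, and invoke Lemma~\ref{lspg} to produce, for each partition, an $(h-1)$-plane through $p$ that is $h$-secant to $X$. Your extra paragraph on why distinct partitions yield distinct planes addresses a point the paper leaves implicit (and your coordinate check does generalize: writing $x_i=\sum_j a^i_j y^i_j$ with all $a^i_j\neq 0$ shows $\left\langle x_1,\dots,x_h\right\rangle$ misses every block-span not equal to an original block), so the proposal is correct.
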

\begin{proof}
Let $p\in \sec_h(X)$ be a general point. Then $p\in\left\langle
  x_1,\dots,x_h\right\rangle$ where $x_i\in X$, $x_i\in\left\langle
  y_1^i,\dots,y_r^i\right\rangle$ for
$y_1^1,\dots,y_r^1,\dots,y_1^h,\dots,y_r^h \in Y$ general points. Let
$\Lambda$ be the span of the points $\{y_{j}^{i}\}$. Then $\Lambda$ has dimension
$hr-1$ and for any choice of $h$ disjoint subsets of $r$ points among the $y_{j}^{i}$ we
have linear subspace $\Lambda_1,\dots,\Lambda_h$ of dimension
$r-1$ spanning $\Lambda$. By Lemma \ref{lspg} for any such choice
there exists an $(h-1)$-plane $\Lambda^{h-1}$ through $p$ intersecting
$\Lambda_1,\dots,\Lambda_h$, and hence $X$ is not $h$-identifiable. 
\end{proof}

\begin{thm}\label{mainCE}
Let $Y\subset\mathbb{P}^N$ be an irreducible and non-degenerate variety. Set $X(Y,r) = \sec_r(Y)$ and let $\tau_{r(h-1)}^Y:Y\dasharrow Y_{r(h-1)}$ be a general $r(h-1)$-tangential projection of $Y$. Assume that
\begin{itemize}
\item[-] $X(Y,r)$ is not $h$-defective and $h\dim(X(Y,r))+h-1\leq N$; and
\item[-] $\tau_{r(h-1)}^Y:Y\dasharrow Y_{r(h-1)}$ is birational and $Y_{r(h-1)}\subset\mathbb{P}^{N_{r(h-1)}}$ is $r$-identifiable. 
\end{itemize} 
Then we have that
\begin{itemize}
\item[-] a general $(h-1)$-tangential projection $\tau_{h-1}^{X(Y,r)}:X(Y,r)\dasharrow X(Y,r)_{h-1}$
is birational; and
\item[-] $X(Y,r)$ is not $h$-identifiable.
\end{itemize}
In particular, $X(Y,r)$ is not an $h$-Bronowski's variety, and if $X(Y,r)_{h-1}\subset\mathbb{P}^{N_{r(h-1)}}$ is a variety of minimal degree then $X(Y,r)$ provides a counterexample to Conjecture \ref{CBro} while when $X(Y,r)_{h-1} = \p^{N_{r(h-1)}}$ it provides a counterexample to both Conjecture \ref{CBroC} and Conjecture \ref{CBro}.
\end{thm}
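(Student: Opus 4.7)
The plan is to identify, via Terracini's lemma, the center of the $(h-1)$-tangential projection of $X(Y,r)$ with the center of the $r(h-1)$-tangential projection of $Y$, and then to transport the $r$-identifiability of $Y_{r(h-1)}$ together with the birationality of $\tau_{r(h-1)}^Y$ back to $X(Y,r)$, in order to recover birationality of $\tau_{h-1}^{X(Y,r)}$. The failure of $h$-identifiability will be immediate from Proposition~\ref{secnoId}, so only birationality requires genuine work.

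Set $X:=X(Y,r)=\sec_r(Y)$. Pick general $x_1,\ldots,x_{h-1}\in X$, and write $x_i\in\left\langle y_1^i,\ldots,y_r^i\right\rangle$ with $y_j^i\in Y$ general. Terracini's lemma yields $T_{x_i}X=\left\langle T_{y_1^i}Y,\ldots,T_{y_r^i}Y\right\rangle$, so that the projection center
\[
\Lambda := \left\langle T_{x_1}X,\ldots,T_{x_{h-1}}X\right\rangle = \left\langle T_{y_j^i}Y \mid 1\le i\le h-1,\; 1\le j\le r\right\rangle
\]
coincides with the center of $\tau_{r(h-1)}^Y$. Denote by $\pi_\Lambda$ the projection from $\Lambda$. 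In particular the two tangential projections land in the same $\mathbb{P}^{N_{r(h-1)}}$, and since $X$ is swept out by $(r-1)$-planes spanned by $r$-tuples of points of $Y$, the image $X_{h-1}=\pi_\Lambda(X)$ equals $\sec_r(Y_{r(h-1)})$.

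Pick a general $q\in X_{h-1}$ and any preimage $p\in X$ of $q$ under $\tau_{h-1}^X$, writing $p\in\left\langle z_1,\ldots,z_r\right\rangle$ with $z_1,\ldots,z_r\in Y$. Projecting, $q$ lies on the $(r-1)$-plane $\left\langle\pi_\Lambda(z_1),\ldots,\pi_\Lambda(z_r)\right\rangle\subset\sec_r(Y_{r(h-1)})$. The $r$-identifiability of $Y_{r(h-1)}$ now pins down the unordered set $\{\pi_\Lambda(z_j)\}=\{\bar{y}_1,\ldots,\bar{y}_r\}$ uniquely from $q$, and the birationality of $\tau_{r(h-1)}^Y$ lifts each $\bar{y}_k$ to a unique $y_k\in Y$; hence $\{z_1,\ldots,z_r\}=\{y_1,\ldots,y_r\}$, and $p$ is forced onto the fixed $(r-1)$-plane $\left\langle y_1,\ldots,y_r\right\rangle$. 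A dimension count using $h\dim X+h-1\le N$ shows that generically $\left\langle y_1,\ldots,y_r\right\rangle\cap\Lambda=\emptyset$, so $\pi_\Lambda$ restricts to a linear isomorphism from $\left\langle y_1,\ldots,y_r\right\rangle$ onto $\Delta=\left\langle\bar{y}_1,\ldots,\bar{y}_r\right\rangle$, and $p$ is uniquely recovered as the single preimage of $q$ on this plane. Therefore $\tau_{h-1}^X$ is birational.

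The ``in particular'' clause is then formal: $X$ is not $h$-identifiable by Proposition~\ref{secnoId} but $\tau_{h-1}^X$ is birational, so $X$ is not an $h$-Bronowski's variety. When $X_{h-1}\subset\mathbb{P}^{N_{r(h-1)}}$ is a variety of minimal degree the hypotheses of Conjecture~\ref{CBro} are all met while the conclusion fails, and in the special case $X_{h-1}=\mathbb{P}^{N_{r(h-1)}}$ (itself of minimal degree and forcing the equality $N=h\dim X+h-1$ that appears in the setting of Conjecture~\ref{CBroC}) we simultaneously contradict both conjectures. The delicate point — which I expect to be the main technical step — is the generic emptiness of $\left\langle y_1,\ldots,y_r\right\rangle\cap\Lambda$, which guarantees that the uniquely determined $r$-tuple $\{y_1,\ldots,y_r\}$ actually produces a unique preimage rather than a positive-dimensional family; the $h$-defectiveness and dimension hypotheses on $X(Y,r)$ are inserted precisely to make this step go through.
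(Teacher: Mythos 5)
Your proof is correct and follows essentially the same route as the paper's: identify the two projection centers via Terracini, deduce $X(Y,r)_{h-1}=\sec_r(Y_{r(h-1)})$, and combine the $r$-identifiability of $Y_{r(h-1)}$ with the birationality of $\tau_{r(h-1)}^Y$ to pin down the unique preimage of a general point, with Proposition~\ref{secnoId} supplying the failure of $h$-identifiability. If anything you are slightly more careful than the paper, which runs the argument by contradiction and leaves implicit the case of two preimages lying on the \emph{same} $r$-secant $(r-1)$-plane --- the point you settle by observing that such a plane meets the center $\Lambda$ trivially, as otherwise $\tau_{h-1}^{X(Y,r)}$ would have positive-dimensional fibers, contradicting the non-$h$-defectiveness hypothesis.
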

\begin{proof}
By the Terracini's lemma \cite{Te11} the tangential projection $\tau_{h-1}^{X(Y,r)}:X(Y,r)\dasharrow X(Y,r)_{h-1}$ restricts to the tangential projection $\tau_{r(h-1)}^Y:Y\dasharrow Y_{r(h-1)}$ and $X(Y,r)_{h-1} = \sec_{r}(Y_{r(h-1)})$.  

Assume that $\tau_{h-1}^{X(Y,r)}:X(Y,r)\dasharrow X(Y,r)_{h-1}$ is not birational. Then
for $p\in X(Y,r)_{h-1}$ general there are two points $x_1\in\left\langle
  y_1^1,\dots,y_r^1\right\rangle$, $x_2\in\left\langle
  y_1^2,\dots,y_r^2\right\rangle$ such that $\tau_{h-1}^{X(Y,r)}(x_1) =
\tau_{h-1}^{X(Y,r)}(x_2) = p$. Since
$Y_{r(h-1)}\subset\mathbb{P}^{N_{r(h-1)}}$ is $r$-identifiable we must
have 
$$\tau_{r(h-1)}^Y(\{y_1^1,\dots,y_r^1\}) =
\tau_{r(h-1)}^Y(\{y_1^2,\dots,y_r^2\}),$$ 
contradicting the birationality of $\tau_{r(h-1)}^Y:Y\dasharrow Y_{r(h-1)}$. Therefore,
$\tau_{h-1}^{X(Y,r)}:X(Y,r)\dasharrow X(Y,r)_{h-1}$ is birational and
hence Proposition \ref{secnoId} implies that $X(Y,r)$ is not an
$h$-Bronowski's variety.
\end{proof}
 
We are ready to provide the counterexamples announced in Theorem~\ref{mainA}.
\begin{proof}[Proof of Theorem~\ref{mainA}]
To conclude we prove that the secants of a  rational normal curve of degree $N\geq 7$ have the properties listed in Theorem~\ref{mainCE}. 

Let  $\Gamma\subset\mathbb{P}^N$ be a rational normal curve of degree $N\geq 7$. Set $X(\Gamma,r) = \sec_r(\Gamma)$ and consider $\sec_h(X(\Gamma,r))$ with $2\leq h\leq\lfloor\frac{N+1}{2}\rfloor$. 

Since $\sec_h(X(\Gamma,r)) = \sec_{hr}(\Gamma)$ and $\Gamma$ is not $hr$-defective we have that $X(\Gamma,r)$ is not $h$-defective.  

Furthermore, $\Gamma_{r(h-1)}\subset \mathbb{P}^{N_{r(h-1)}}$ is a rational normal curve of degree $N-2r(h-1)$, and hence $\tau_{r(h-1)}^\Gamma:\Gamma\dasharrow \Gamma_{r(h-1)}$ is birational and $\Gamma_{r(h-1)}\subset\mathbb{P}^{N_{r(h-1)}}$ is $r$-identifiable.  

So $X(\Gamma,r) = \sec_r(\Gamma)\subset\mathbb{P}^N$ is not an $h$-Bronowski's variety. Moreover, when $h = \frac{N+1}{2r}$ is an integer then $X(\Gamma,r)_{h-1} = \mathbb{P}^{N_{r(h-1)}}$ is of minimal degree and hence $X(\Gamma,r)$ provides a counterexample to both Conjecture \ref{CBroC} and Conjecture~\ref{CBro}. 
\end{proof}

\begin{Remark}
The argument in the proof of Theorem~\ref{mainA} does not work when $\Gamma\subset\mathbb{P}^N$ is not a rational normal curve. Assume $h = \frac{N+1}{2r}$ is an integer so that $N_{r(h-1)} = 2r-1$. If $\Gamma\subset\mathbb{P}^N$ is not a rational normal curve then also $\Gamma_{r(h-1)}\subset \mathbb{P}^{N_{r(h-1)}}$ is not a rational normal curve and so it is not $r$-identifiable. Indeed, assume that $\Gamma_{r(h-1)}\subset\mathbb{P}^{2r-1}$ is $r$-identifiable. Then \cite[Corollary 4.5]{CR06} yields that a general $(r-1)$-tangential projection of $\Gamma_{r(h-1)}$ is birational, so $\Gamma_{r(h-1)}$ is rational. On the other hand, since $\tau_{r-1}^{\Gamma_{r(h-1)}}$ is birational a general hyperplane containing the span of $r-1$ general tangent spaces of $\Gamma_{r(h-1)}$ intersects $\Gamma_{r(h-1)}$ in a $0$-dimensional scheme of degree $2r-1 = 2(r-1)+1$. Therefore, $\deg(\Gamma_{r(h-1)}) = 2r-1$ and $\Gamma_{r(h-1)}\subset\mathbb{P}^{2r-1}$ is a rational normal curve.  
    
For instance, let $\Gamma\subset\mathbb{P}^7$ be a general projection
of a rational normal curve of degree eight in $\mathbb{P}^8$, and
$X(\Gamma,2) = \sec_2(\Gamma)$. By the argument in the proof of
Theorem \ref{mainCE} we have that $X(\Gamma,2)$ is not
$2$-identifiable. Furthermore, $\tau_{2}^\Gamma:\Gamma\dasharrow
\Gamma_{2}$ is birational and $\Gamma_2\subset\mathbb{P}^3$ is a
rational curve of degree four. Hence, it is not $2$-identifiable:
through a general point $p\in \mathbb{P}^3$ there are  three lines secant to $\Gamma_2$. On the other hand, $\Gamma\subset\mathbb{P}^7$ is $2$-identifiable and so $\tau_{1}^{X(\Gamma,2)}:X(\Gamma,2)\dasharrow\mathbb{P}^3$ must have degree at least three.   

Furthermore, when $X(\Gamma,r)_{h-1} \neq \p^{N_{r(h-1)}}$ we do not expect $X(\Gamma,r)_{h-1} \subsetneq \p^{N_{r(h-1)}}$ to have minimal degree. For instance, already for the simplest case in which $\Gamma\subset\mathbb{P}^8$ is a rational normal curve of degree eight and $\Gamma_2\subset\mathbb{P}^4$ is a rational normal curve of degree four we have that $X(\Gamma,2)_{1} = \sec_2(\Gamma_2)\subset \mathbb{P}^4$ is a cubic hypersurface.  
\end{Remark}

\section{Identifiability and the Bronowski's conjecture}\label{sec3}

In this section we prove Theorem~\ref{mainB}. We start by recalling some results about tangential contact loci.

\begin{Lemma}\label{CLoc} 
Let $X\subset\mathbb{P}^N$ be an irreducible and non-degenerate variety. Let $A\subset X$ be a set of $h$ general points and $\Gamma_h$ the associated contact locus. Then we have that:
\begin{itemize}
\item[(a)] $\Gamma_h$ is equidimensional and it is either irreducible or reduced with exactly $h$ irreducible component, each of them containing a single point of $A$;
\item[(b)] $\Span{\Gamma_h}=\sec_h(\Gamma_h)$ and $\sec_i(\Gamma_h)\neq \Span{\Gamma_h}$ for $i<h$;
\item[(c)] if $\Gamma_h$ is irreducible then the $h$-secant degree of $X$ is equal to the $h$-secant degree of $\Gamma_h$. 
\end{itemize}
\end{Lemma}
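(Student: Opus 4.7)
The three parts can be attacked sequentially, the common tools being Terracini's lemma together with a monodromy argument on the family of contact loci.

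For part $(a)$, the plan is to exploit the symmetry of the construction under the symmetric group $S_h$ acting on the ordered tuples $(x_1,\dots,x_h)\in X^h$. The parameter space $X^h$ carries the obvious $S_h$-action, and the family whose fiber over $(x_1,\dots,x_h)$ is the contact locus $\Gamma_{x_1,\dots,x_h}$ is $S_h$-equivariant. By the very definition of $\Gamma_h$, every irreducible component contains at least one of the $x_i$. Combining this with the monodromy of the family as $(x_1,\dots,x_h)$ varies, one shows that the only two possibilities compatible with the $S_h$-symmetry are: either a single irreducible component contains \emph{all} the points $x_i$, or the components are permuted transitively by $S_h$ so that each contains exactly one $x_i$ and there are exactly $h$ of them. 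Equidimensionality follows from the same transitivity.

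For part $(b)$, I would apply Terracini's lemma on both sides of the secant picture. On $X$, Terracini gives $T_p\sec_h(X)=\Span{T_{x_1}X,\dots,T_{x_h}X}$ for a general $p\in\Span{x_1,\dots,x_h}$, and by the definition of the contact locus this subspace is tangent to $X$ along $\Gamma_h$; in particular $\Gamma_h\subseteq X\cap T_p\sec_h(X)$. The inclusion $\sec_h(\Gamma_h)\subseteq\Span{\Gamma_h}$ is clear since $x_i\in\Gamma_h$. For equality, note $p\in\Span{x_1,\dots,x_h}\subseteq\sec_h(\Gamma_h)$, and by applying Terracini to $\Gamma_h$ together with part $(a)$---picking one general point in each component in the reducible case, or $h$ general points on the unique component in the irreducible case---one reads off that $\sec_h(\Gamma_h)$ fills $\Span{\Gamma_h}$, for otherwise the $x_i$ would sit in a linear slice of $T_p\sec_h(X)$ too small to make $p$ generic in $\Span{x_1,\dots,x_h}$. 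For the strict inequality $\sec_i(\Gamma_h)\subsetneq\Span{\Gamma_h}$ when $i<h$, if equality held then $p$ would lie in $\sec_i(\Gamma_h)\subseteq\sec_i(X)$, contradicting the generality of $p\in\sec_h(X)\setminus\sec_{h-1}(X)$ together with the minimality of the chosen tuple.

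For part $(c)$, assume $\Gamma_h$ is irreducible and let $(x_1',\dots,x_h')$ be another general $h$-tuple of points of $X$ whose span contains $p$. By Terracini's lemma applied at $p$ one has $\Span{T_{x_1'}X,\dots,T_{x_h'}X}=T_p\sec_h(X)=\Span{T_{x_1}X,\dots,T_{x_h}X}$, and by the very definition of the tangential contact locus each $x_i'$ belongs to $\Gamma_h$. Hence every preimage of $p$ under $\pi_h^X$ already lies in $\Sec_h(\Gamma_h)$, so the fiber $(\pi_h^X)^{-1}(p)$ is identified with $(\pi_h^{\Gamma_h})^{-1}(p)$, yielding the desired equality of $h$-secant degrees.

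The main obstacle I expect is the rigorous monodromy argument in $(a)$: one must lift the naive $S_h$-symmetry to an actual constraint on the set of irreducible components of $\Gamma_h$, and then rule out all partition types of $\{x_1,\dots,x_h\}$ among the components other than the two listed in the statement. Once $(a)$ is in place, parts $(b)$ and $(c)$ reduce to comparatively formal consequences of Terracini's lemma together with the definition of the contact locus.
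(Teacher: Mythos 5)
The paper does not prove this lemma at all: its ``proof'' consists of the two citations [CC10, Proposition 3.9] for items (a)--(b) and [BBC, Corollary 4.5] for item (c). Your strategy --- monodromy for (a), Terracini for (b) and (c) --- is exactly the one used in those references, so the route is not different in spirit; the question is whether your sketch actually closes the arguments, and in several places it does not.

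Concretely: in (a) you flag the missing step yourself, but the obstruction is not only ``partition types'': a priori a marked point may lie on several components and components may share marked points, so the incidence $x_i\mapsto\{\text{components through }x_i\}$ need not define a partition of $A$ at all, and pure $S_h$-equivariance does not exclude, say, $\binom{h}{2}$ components each containing exactly two of the $x_i$; ruling this out (and getting transitivity of the monodromy on components, which is what gives equidimensionality) needs the irreducibility of the relevant incidence variety, not just the symmetry. In (b) the substantive claim is the inclusion $\Span{\Gamma_h}\subseteq\sec_h(\Gamma_h)$; your justification (``otherwise the $x_i$ would sit in a linear slice too small to make $p$ generic'') is not an argument, because $p\in\Span{x_1,\dots,x_h}\subseteq\sec_h(\Gamma_h)$ holds no matter how small $\sec_h(\Gamma_h)$ is inside the span, and nothing you write forces the secant variety of the contact locus to fill $\Span{\Gamma_h}$. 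In (c) two points are missing: first, Terracini gives that $\Span{T_{x_1}X,\dots,T_{x_h}X}$ is tangent to $X$ at each $x_i'$, hence that $x_i'$ lies in the full tangency locus of that linear space, but $\Gamma_h$ is by definition only the union of those components of that locus which contain one of the \emph{original} points $x_1,\dots,x_h$, so the membership $x_i'\in\Gamma_h$ requires an extra argument; second, to read off the $h$-secant degree of $\Gamma_h$ from the fiber over $p$ you need $p$ to be a general point of $\sec_h(\Gamma_h)$, which does not follow from $p$ being general in $\Span{x_1,\dots,x_h}$, since the $x_i$ are general in $X$ but are not known to be general points of $\Gamma_h$ (the contact locus moves with them). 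These are precisely the places where the cited proofs do real work.
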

\begin{proof}
Items (a) and (b) are in \cite[Proposition 3.9]{CC10}, and (c) is in \cite[Corollary 4.5]{BBC}. 
\end{proof}

Next, even if not strictly necessary, we take the opportunity to
strengthen \cite[Proposition 3,5]{CC02} with the following lemma.
\begin{Lemma}\label{taupi}
Let $X\subset\mathbb{P}^N$ be an irreducible and non-degenerate variety of dimension $n$. Assume that $X$ is not $h$-defective and $(h+1)n+h\leq N$. Then a general fiber of $\tau_h^X$ and a general fiber of $\pi_{h+1}^X$ have the same dimension. 
\end{Lemma}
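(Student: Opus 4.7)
The plan is to show that both general fiber dimensions equal the single expression $(h+1)n+h-\dim\sec_{h+1}(X)$, so the lemma will follow without requiring any hypothesis at the $(h+1)$-th step of defectiveness. The main tool is Terracini's lemma, applied both to $\sec_{h+1}(X)$ itself and in its join variant for $\mathrm{Join}(\Lambda,X)$, where $\Lambda$ is the span of $h$ general tangent spaces. The inequality $(h+1)n+h\leq N$ enters only to guarantee that none of these dimensions collapse because the ambient space is too small.

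First I would fix general $x_1,\dots,x_h\in X$ and set $\Lambda=\langle T_{x_1}X,\dots,T_{x_h}X\rangle$. Since $X$ is not $h$-defective, Terracini's lemma yields $\dim\Lambda=hn+h-1$, and $(h+1)n+h\leq N$ forces $\dim\Lambda<N$, so the tangential projection $\tau_h^X\colon X\dasharrow X_h\subset\mathbb{P}^{N_h}$ is a genuine rational map. The closure in $\mathbb{P}^N$ of the preimage of $X_h$ under the linear projection from $\Lambda$ is the join $\mathrm{Join}(\Lambda,X)$, a cone with vertex $\Lambda$ over $X_h$, so $\dim\mathrm{Join}(\Lambda,X)=\dim\Lambda+1+\dim X_h=hn+h+\dim X_h$. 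Rearranging, the general fiber of $\tau_h^X$ has dimension
$$n-\dim X_h=(h+1)n+h-\dim\mathrm{Join}(\Lambda,X).$$

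Next I would invoke the join version of Terracini: differentiating the parametrization $(\lambda,x,t)\mapsto\lambda+tx$ of $\mathrm{Join}(\Lambda,X)$ at a general triple identifies the tangent space to the join with $\Lambda+T_{x_{h+1}}X=\langle T_{x_1}X,\dots,T_{x_{h+1}}X\rangle$, which by Terracini's lemma applied to $\sec_{h+1}(X)$ is the tangent space to $\sec_{h+1}(X)$ at its general point. Therefore $\dim\mathrm{Join}(\Lambda,X)=\dim\sec_{h+1}(X)$, and combining with the previous display the general fiber of $\tau_h^X$ has dimension $(h+1)n+h-\dim\sec_{h+1}(X)$. On the other side, $\Sec_{h+1}(X)$ is a $\mathbb{P}^h$-bundle over the $(h+1)n$-dimensional variety $\mathcal{S}_{h+1}(X)$, so $\dim\Sec_{h+1}(X)=(h+1)n+h$ and the general fiber of $\pi_{h+1}^X$ has exactly the same dimension $(h+1)n+h-\dim\sec_{h+1}(X)$, concluding the proof.

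The one step that requires genuine care, and which I expect to be the main obstacle to write cleanly, is the join Terracini computation: one has to check that the differential of the parametrization of $\mathrm{Join}(\Lambda,X)$ generically surjects onto $\Lambda+T_{x_{h+1}}X$, and that this linear space really coincides with the tangent space of $\sec_{h+1}(X)$ at the image point. Both facts are standard consequences of Terracini's lemma in characteristic zero, but they deserve to be spelled out because some of the tangent directions to $\mathrm{Join}(\Lambda,X)$ along $X$ are tautologically absorbed by the vertex $\Lambda$, and one needs to verify that the remaining directions fill out the stated span transversally.
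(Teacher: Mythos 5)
Your proof is correct and is essentially the paper's argument in different packaging: the paper sets $\delta=\dim(\langle T_{x_1}X,\dots,T_{x_h}X\rangle\cap T_{x_{h+1}}X)$ and shows by Terracini that both general fibers have dimension $\delta+1$, which is exactly your common value $(h+1)n+h-\dim\sec_{h+1}(X)$ because $\dim\sec_{h+1}(X)=(h+1)n+h-1-\delta$. Your detour through $\mathrm{Join}(\Lambda,X)$ and the join version of Terracini is sound, but the paper short-circuits it by observing directly that projection from $\Lambda$ sends $T_{x_{h+1}}X$ to a linear space of dimension $n-\delta-1$, which is the general tangent space of $X_h$.
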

\begin{proof}
Take $x_1,\dots,x_{h+1}\in X$ general points and set 
$$\delta = \dim(\left\langle T_{x_1}X,\dots,T_{x_{h}}X \right\rangle \cap T_{x_{h+1}}X).$$ 
By the Terracini's lemma \cite{Te11} the general fiber of $\pi_{h+1}^X$ has dimension
$$
(h+1)n+h - (hn+h-1+n-\delta) = \delta + 1.
$$ 
To conclude it is enough to note that a general tangent space of the image $\tau_h^X(X)$ of $X$ via the $h$-tangential projection has dimension $n-\delta-1$.
\end{proof}

The first step to prove Theorem~\ref{mainB} consists in studying the case of varieties with $\pi_h^X$ not birational and contact locus $\Gamma_h$ given by an irreducible curve.

\begin{Proposition}\label{cldim1}
Let $X\subset\mathbb{P}^N$ be an irreducible and non-degenerate variety. Assume that 
\begin{itemize}
\item[-] $X$ is not $h$-identifiable; and
\item[-] the general $h$-tangential contact locus of $X$ is an irreducible curve.
\end{itemize} 
Then $\pi_{h+1}^X$ is of fiber type. 
\end{Proposition}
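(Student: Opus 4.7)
The strategy is to reduce the statement to the assertion that the general $h$-tangential projection $\tau_h^X$ has positive-dimensional fibres, and then exploit the curve structure of $\Gamma_h$ via Proposition~\ref{Bro_cur}.

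First, dispose of two degenerate cases. If $(h+1)n+h > N$, then $\dim\sec_{h+1}(X)\leq N < (h+1)n+h = \dim \Sec_{h+1}(X)$, so $\pi_{h+1}^X$ is automatically of fiber type and we are done. If instead $X$ is $h$-defective, Terracini's lemma gives
$$\dim\Span{T_{x_1}X,\dots,T_{x_h}X} < hn+h-1,$$
so adding one more tangent space forces $\dim\sec_{h+1}(X) \leq \dim\sec_{h}(X) + n + 1 < (h+1)n+h$, and thus $X$ is $(h+1)$-defective. Hence we may assume $(h+1)n+h \leq N$ and $X$ is not $h$-defective. Now Lemma~\ref{taupi} applies and reduces the proposition to showing that a general fiber of $\tau_h^X$ has positive dimension.

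Next, extract information from the curve $\Gamma_h$. By Lemma~\ref{CLoc}(b) we have $\sec_h(\Gamma_h)=\Span{\Gamma_h}$, and since non-degenerate curves have non-defective secant varieties, $\dim\Span{\Gamma_h}=2h-1$. By Lemma~\ref{CLoc}(c), the $h$-secant degree of $\Gamma_h$ equals the $h$-secant degree of $X$, which is at least $2$ by the non-identifiability hypothesis. Since $h\leq \lfloor(2h-1)/2\rfloor+1=h$, Proposition~\ref{Bro_cur} applies to the non-degenerate irreducible curve $\Gamma_h\subset\mathbb{P}^{2h-1}$ and gives that $\Gamma_h$ is an $h$-Bronowski variety. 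As $\Gamma_h$ is not $h$-identifiable, the tangential projection $\tau_{h-1}^{\Gamma_h}\colon\Gamma_h\dasharrow\mathbb{P}^1$ is therefore \emph{not} birational.

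The last and hardest step is to transfer this failure of birationality at the curve level to a positive-dimensional fibre of $\tau_h^X$ on the ambient variety. The point is that $\Gamma_h\subset\Span{\Gamma_h}\subset L_h$ already lies in the center of $\tau_h^X$, and the non-birationality of $\tau_{h-1}^{\Gamma_h}$ records the presence of extra $h$-secant $(h-1)$-planes to $\Gamma_h$ through a general point of its span. A natural route to complete the argument is to deduce via this extra secant structure that $\sec_h(X)$ is in fact a cone (its general tangent space $L_h$ must contain the positive-dimensional locus of "coincidences" forced by the non-birationality), and then invoke Lemma~\ref{sec_cone2} to conclude that $\pi_{h+1}^X$ is of fiber type. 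The main obstacle I anticipate is precisely carrying out this transfer: one must compare the intrinsic tangent spaces $T_z\Gamma_h\subset\Span{\Gamma_h}$ with the ambient tangent spaces $T_zX\subset L_h$ for $z\in\Gamma_h$ and translate the ramification/degree of $\tau_{h-1}^{\Gamma_h}$ into the geometry of the projection of $X$ from $L_h$. Once this identification is made, the positive-dimensional family of pairs of points of $\Gamma_h$ with coinciding image under $\tau_{h-1}^{\Gamma_h}$ yields the required positive-dimensional fibre for $\tau_h^X$, and the conclusion follows from Lemma~\ref{taupi}.
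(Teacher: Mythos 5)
Your reduction to the case $(h+1)n+h\leq N$, $X$ not $h$-defective, and the use of Lemma~\ref{taupi} to rephrase the goal as ``$\tau_h^X$ has positive-dimensional fibres'' all match the paper, as does the observation that $\Gamma_h$ is an irreducible, non-degenerate, not $h$-identifiable curve to which Proposition~\ref{Bro_cur} applies. But the decisive step --- transferring the non-birationality of a curve-level tangential projection to a positive-dimensional fibre of $\tau_h^X$ --- is exactly the step you leave as ``the main obstacle I anticipate,'' and the two routes you sketch do not work as stated. The direct route fails because $\Gamma_h=\Gamma_{x_1,\dots,x_h}$ is entirely contained in the centre $\Span{T_{x_1}X,\dots,T_{x_h}X}$ of $\tau_h^X$ (every $z\in\Gamma_h$ satisfies $T_zX\subset\Span{T_{x_1}X,\dots,T_{x_h}X}$, hence $z$ lies in that span), so $\tau_h^X$ is undefined along $\Gamma_h$ and pairs of points of $\Gamma_h$ identified by $\tau_{h-1}^{\Gamma_h}$ say nothing about the fibre of $\tau_h^X$ over the image of a \emph{general} point of $X$. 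The cone route is unsubstantiated: nothing in your argument shows $\sec_h(X)$ is a cone, and Lemma~\ref{sec_cone2} cannot be invoked without that.

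The paper's missing idea is to bring in an auxiliary general point $x\in X$ \emph{off} the contact curve. For $y_1,\dots,y_{h-1}\in\Gamma_{x_1,\dots,x_h}$ and $x\in X$ general, the contact locus $\Gamma_{y_1,\dots,y_{h-1},x}$ is again an irreducible, not $h$-identifiable curve passing through the general point $x$, and $\tau_{x_1,\dots,x_h}^X$ restricts on it to the tangential projection $\tau_{y_1,\dots,y_{h-1}}^{\Gamma_{y_1,\dots,y_{h-1},x}}$. Proposition~\ref{Bro_cur} applied to \emph{this} curve produces a point of $\Gamma_{y_1,\dots,y_{h-1},x}\setminus\{x\}$ with the same image as $x$ under $\tau_{x_1,\dots,x_h}^X$; letting $y_1,\dots,y_{h-1}$ vary along the curve $\Gamma_{x_1,\dots,x_h}$ sweeps out a positive-dimensional subvariety of the fibre $(\tau_{x_1,\dots,x_h}^X)^{-1}(\tau_{x_1,\dots,x_h}^X(x))$, after which Lemma~\ref{taupi} concludes. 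Without this (or an equivalent) device your argument has a genuine gap.
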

\begin{proof}
We may assume that $(h+1)n+h\leq N$ and that $X$ is not $h$-defective. Let $x_1,\dots,x_h\in X$ be general points and $\Gamma_{x_1,\dots,x_h}$ the corresponding contact locus. 

By hypothesis $\Gamma_{x_1,\dots,x_h}$ is an irreducible curve. Choose
$y_1,\dots,y_{h-1}\in \Gamma_{x_1,\dots,x_h}$ and $x\in X$
general. The tangential projection $\tau_{x_1,\dots,x_h}^X$ restricts
on $\Gamma_{y_1,\dots,y_{h-1},x}$ to the tangential projection
$\tau_{y_1,\dots,y_{h-1}}^{\Gamma_{y_1,\dots,y_{h-1},x}}$. The contact
locus
$\Gamma_{y_1,\dots,y_{h-1},x}$ is an irreducible, not $h$-identifiable
curve and, by Lemma~\ref{CLoc} we have
$$\sec_{h}(\Gamma_{y_1,\dots,y_{h-1},x})=\Span{\Gamma_{y_1,\dots,y_{h-1},x}}.$$
Then by Proposition~\ref{Bro_cur} the contact locus $\Gamma_{y_1,\dots,y_{h-1},x}$ is an $h$-Bronowski's
variety and 
$\tau_{y_1,\dots,y_{h-1}}^{\Gamma_{y_1,\dots,y_{h-1},x}}$ is not
birational.   This shows that, for a general choice of $y_1,\dots,y_{h-1}$
in $\Gamma_{x_1,\dots,x_h}$ there is a point in
$\Gamma_{y_1,\dots,y_{h-1},x}\setminus\{x\}$ that is mapped to
$\tau_{x_1,\dots,x_h}^X(x)$. Therefore by letting $y_1,\dots,y_{h-1}$ in $\Gamma_{x_1,\dots,x_h}$ vary we produce a positive dimensional subvariety of the fiber $(\tau_{x_1,\dots,x_{h}}^{X})^{-1}((\tau_{x_1,\dots,x_{h}}^X)(x))$. Hence $\tau_{x_1,\dots,x_{h}}^{X}$ is of fiber type, and Lemma \ref{taupi} yields that $\pi_{h+1}^X$ is of fiber type as well.  
\end{proof}
\begin{Remark}
Note that the proof of Proposition \ref{cldim1} could work for non $1$-tangentially weakly defective varieties with irreducible $h$-tangential contact loci of arbitrary dimension if Conjecture~\ref{con:Bro_mio} was proved. 
\end{Remark}

We are now ready to prove Theorem~\ref{mainB}.

\begin{proof}[Proof of Theorem~\ref{mainB}] Note that under our hypotheses $\pi_{h+1}^X$ is a generically finite morphism and $X$ is not $1$-tangentially weakly defective.  

Assume, by contradiction, that $X$ is not $h$-identifiable and let $\Gamma_h$ be a general $h$-tangential contact locus. By Proposition~\ref{cldim1} we may assume that $\Gamma_h$ is not an irreducible curve.

If $X$ is $h$-defective then $\pi_{h}^X$ is of fiber type and hence $\pi_{h+1}^X$ is also of fiber type. Therefore, we may assume that $X$ is not $h$-defective that is $\Span{T_{x_1}X,\dots,T_{x_{h-1}}X}\cap T_{x_h}X = \emptyset$.

Let $\{x_1,\dots,x_h,y\}\subset X$ be a set of general points and $\Gamma_{h+1} = \Gamma_{x_1,\dots,x_h,y}$ the associated $(h+1)$-tangential contact locus. To conclude the proof it is enough to prove that $\Span{T_{x_1}X,\dots,T_{x_h}X}\cap T_pX\neq\emptyset$ for $p\in \Gamma_{h+1}$ a general point.
  
For a general point $p\in \Gamma_{h+1}$ we will denote by $\Gamma^1_p$ the irreducible component through $p$ of the $h$-tangential contact locus $\Gamma_{x_1,\dots,x_{h-1},p}$ and by $\Gamma^h_p$ the irreducible component through $p$ of the $h$-tangential contact locus $\Gamma_{x_2,\dots,x_{h},p}$, note that by \cite[Proposition 3.9]{CC10} such irreducible components are unique. 

By hypothesis a general $h$-tangential contact locus of $X$ is either of dimension greater that one or a reducible curve. Therefore, we can choose a curve $\widetilde{\Gamma}^i_p\subset\Gamma^i_p$ through $p$ and not containing $x_i$ for $i = 1,h$. Note that for a general point $x\in\widetilde{\Gamma}^1_p$ we have $T_xX\subset\Span{T_{x_1}X,\dots,T_{x_{h-1}}X,T_pX}$. Then by semicontinuity for any point $w\in\widetilde{\Gamma}^1_p$ there is an $n$-plane $A_w\subseteq T_wX$ contained in $\Span{T_{x_1}X,\dots,T_{x_{h-1}}X,T_pX}$ which is the flat limit of the tangent spaces of $X$ along $\widetilde{\Gamma}^1_p$. Set
$$T(\widetilde{\Gamma}^1_p)=\Span{A_w}_{w\in\widetilde{\Gamma}^1_p},$$
$\dim(T(\widetilde{\Gamma}^1_p)) = n+a$, and 
$$
Y(\widetilde{\Gamma}^1_p)= \overline{\bigcup_{x\in\widetilde{\Gamma}^1_p \text{ general}}T_xX}.
$$
Note that $Y(\widetilde{\Gamma}^1_p)\subset T(\widetilde{\Gamma}^1_p)$. Since $X$ is not $1$-tangentially weakly defective we have that $\dim(Y(\widetilde{\Gamma}^1_p)) = n+1$ and consequently $a \geq 1$. 

Furthermore, since 
$$Y(\widetilde{\Gamma}^1_p),T(\widetilde{\Gamma}^1_p),\Span{T_{x_1}X,\dots,T_{x_{h-1}}X}\subset \Span{T_{x_1}X,\dots,T_{x_{h-1}}X,T_pX}$$ 
we get
\stepcounter{thm}
\begin{equation}\label{equ0}
\dim(T(\widetilde{\Gamma}^1_p)\cap \Span{T_{x_1}X,\dots,T_{x_{h-1}}X}) = a-1\geq 0,
\end{equation}
and
\stepcounter{thm}
\begin{equation}\label{equ1}
\dim(Y(\widetilde{\Gamma}^1_p)\cap \Span{T_{x_1}X,\dots,T_{x_{h-1}}X}) \geq 0.
\end{equation}
In particular, (\ref{equ1}) yields that there is a $z\in\widetilde{\Gamma}^1_p$ such that 
$$A_z\cap \Span{T_{x_1}X,\dots,T_{x_{h-1}}X}\neq \emptyset,$$ 
and since $x_1\notin\widetilde{\Gamma}^1_p$ we have $z\neq x_1$. Moreover, since $x_1,\dots,x_{h-1}\in X$ are general we may assume that 
\stepcounter{thm}
\begin{equation}\label{equ*}
A_z\cap \Span{T_{x_2}X,\dots,T_{x_{h-1}}X} = \emptyset.
\end{equation}
First, assume that $A_z\cap T_{x_h}X\neq\emptyset$. Then (\ref{equ0}) implies that 
$$
\dim(T(\widetilde{\Gamma}^1_p)\cap \Span{T_{x_1}X,\dots,T_{x_{h}}X}) \geq a\geq 1.
$$
Set $B_p = T(\widetilde{\Gamma}^1_p)\cap
\Span{T_{x_1}X,\dots,T_{x_{h}}X}$. Then $\dim(B_p)\geq a$. Since
$T_pX$ and $B_p$ both live in $T(\widetilde{\Gamma}^1_p)$, which has
dimension $n+a$, we get that $\dim(B_p\cap T_pX)\geq 0$. Therefore,
$B_p\cap T_pX\neq\emptyset$ and since $B_p\subset
\Span{T_{x_1}X,\dots,T_{x_{h}}X}$ we conclude that
$\Span{T_{x_1}X,\dots,T_{x_{h}}X}\cap T_pX\neq\emptyset$ as well. A contradiction.

Now, assume that $A_z\cap T_{x_h}X = \emptyset$ and consider $\Span{A_z,T_{x_2}X,\dots,T_{x_{h}}X}$. By semicontinuity to this linear space we may associate an $h$-tangential contact locus $\Gamma_{x_2,\dots,x_h,z}$, and we consider its irreducible component $\Gamma_z^h$ passing through $z$. Set 
$$
T(\Gamma_z^h) = \left\langle A_w\right\rangle_{w\in\Gamma_z^h}
$$
and $\dim(T(\Gamma_z^h)) = n+a$. Arguing as in the first part of the proof we get that $\dim(T(\Gamma_z^h)\cap \Span{T_{x_2}X,\dots,T_{x_{h}}X}) = a-1\geq 0$, and hence (\ref{equ*}) yields that $\dim(T(\Gamma_z^h)\cap \Span{T_{x_1}X,\dots,T_{x_{h}}X}) = a\geq 1$. Set $B_z = T(\Gamma_z^h)\cap \Span{T_{x_1}X,\dots,T_{x_{h}}X}$. Then $\dim(B_z) = a \geq 1$ and for all $w\in \Gamma_z^h$ we have $A_w,B_z\subset T(\Gamma_z^h)$ and $\dim(A_w\cap B_z)\geq 0$. So 
\stepcounter{thm}
\begin{equation}\label{equ(1)}
A_w\cap \Span{T_{x_1}X,\dots,T_{x_{h}}X} \neq \emptyset \text{ for all } w\in\Gamma_z^h. 
\end{equation}
Moreover, since $z\neq x_1$, the points $x_1,\dots,x_h$ are general and $X$ is not $h$-defective we have that
\stepcounter{thm}
\begin{equation}\label{equ(2)}
A_w\cap \Span{T_{x_1}X,\dots,T_{x_{h-1}}X} = \emptyset \text{ for } w\in\Gamma_z^h \text{ general}. 
\end{equation}
Now, take a general point $w\in\Gamma_z^h$ and consider the
irreducible component $\Gamma_w^1$ of the $h$-tangential contact locus
$\Gamma_{x_1,\dots,x_{h-1},w}$ through $w$, note that by
\cite[Proposition 3.9]{CC10} such irreducible component is
unique. Since $z\neq x_1$ and $x_1,\dots,x_{h-1}$ are general we have
$z\notin\Gamma_w^1$ and so $\Gamma_w^1\neq\Gamma_z^h$.  The point
$z\in\Gamma^1_p$ is general, therefore  for
any  proper subvariety $W\subset
\Gamma_p^1$ we have 
$$
\Gamma_z^h\subset\overline{\bigcup_{v\in\Gamma^1_p\setminus W}\Gamma_v^h}.
$$
Set
$$Y=\overline{\bigcup_{v\in\Gamma^1_p \text{ general}}\Gamma_v^h}.$$
Then $\Gamma_z^h$ is contained in $Y$, and for $q\in\Gamma^1_p$ general we have that 
$$
\Gamma^1_q\subseteq \overline{\bigcup_{w\in\Gamma^h_p \text{ general}}\Gamma_w^1}.
$$
Hence $\Gamma_p^1$ is contained in
$$
Z = \overline{\bigcup_{w\in\Gamma^h_z \text{ general}}\Gamma_w^1}
$$
and a general point of $Z$ is a general point of $X$. Set
$$
T(\widetilde{\Gamma}_w^1) = \left\langle A_v\right\rangle_{v\in\Gamma_w^1}
$$
and $\dim(T(\widetilde{\Gamma}_w^1)) = n+a$. Arguing as in the first part of the proof we have that $\dim(T(\widetilde{\Gamma}_w^1)\cap \left\langle T_{x_1}X,\dots,T_{x_{h-1}}X\right\rangle) = a-1\geq 0$, and hence (\ref{equ(1)}) and (\ref{equ(2)}) yield $\dim(T(\widetilde{\Gamma}_w^1)\cap \left\langle T_{x_1}X,\dots,T_{x_{h}}X\right\rangle) \geq a$. Set $B_w = T(\widetilde{\Gamma}_w^1)\cap \left\langle T_{x_1}X,\dots,T_{x_{h}}X\right\rangle$. So $\dim(B_w)\geq a$. 

Let $\xi\in Z$ be a general point. Then $\xi\in\widetilde{\Gamma}^1_w$ for some $w\in\Gamma_z^h$. Hence, $A_{\xi},B_w\subset T({\Gamma}_w^1)$ yield $\dim(A_{\xi}\cap B_w)\geq 0$ and so $A_{\xi}\cap \left\langle T_{x_1}X,\dots,T_{x_{h}}X\right\rangle\neq \emptyset$. Note that since $\xi \in Z$ is general and a general point of $Z$ is a general point of $X$ we have proved that $T_{x}X\cap \left\langle T_{x_1}X,\dots,T_{x_{h}}X\right\rangle\neq \emptyset$ for $x\in X$ general. Therefore, $\tau_h^X$ has positive dimensional fibers and hence $X$ is $(h+1)$-defective contradicting the hypotheses and concluding the proof.
\end{proof}
  
\begin{Remark}\label{n1twdn}
We stress that, as the following example shows, Theorem~\ref{mainB} in general does not hold for $1$-tangentially weakly defective varieties. Let $Y\subset\mathbb{P}^{11}$ be
a rational normal curve, and set $X = \sec_2(Y)$. Note that, by the
Terracini's lemma \cite{Te11}, $X$ is $1$-tangentially weakly
defective. Since $\sec_3(X) = \sec_6(Y) = \mathbb{P}^{11}$ we have
that $X$ is not $3$-defective. On the other hand, Proposition
\ref{secnoId} yields that $X$ is not $2$-identifiable. 
\end{Remark}

We conclude this section by putting Corollary~\ref{cor:Bro_mio_vera} in the
perspective of \cite[Theorem 1.5]{CK22} stating that
Conjecture~\ref{CBro} holds under the assumption that $\pi_{h+1}^X$ is
generically finite.

\begin{Remark}\label{rem:Th15} Let $X$ be an irreducible and non-degenerate variety with $\pi_{h+1}$ generically finite.  Assume that $X$ is not $1$-tangentially weakly defective. Then, by Theorem \ref{mainB}, $X$ is
$h$-identifiable and hence $\tau_{h-1}^X$ is birational.
This neither requires nor gives any information about the degree of
the involved varieties. 

Let us conclude by proving that there does not exist any $1$-tangentially weakly defective variety $X$ with $X_{h-1}$ of minimal degree. Assume that $X$ is $1$-tangentially weakly defective, $\pi_{h+1}^X$ is
finite and $X_{h-1}$ is of minimal degree. Fix $x_1,\dots,x_{h-1},x\in
X$ general points. Since $\pi_{h+1}^X$ is finite we have that
$(h+1)n+h\leq N$ and hence $2n+1\leq N_{h-1}$. The tangent space
$T_xX$ is tangent to $X$ along a positive dimensional subvariety
$Y\subset X$ and so $T_{\overline{x}}X_{h-1}$ is tangent to $X_{h-1}$
along $\overline{Y} = \overline{\tau_{x_1,\dots,x_{h-1}^X}(Y)}$, where
$\overline{x} = \tau_{x_1,\dots,x_{h-1}^X}(x)$.  

Since $\pi_{h+1}^X$ is finite $\tau_{x_1,\dots,x_{h-1}}^X$ is also
finite, and since $x_1,\dots,x_{h-1},x\in X$ are general
$\overline{Y}$ has positive dimension. Therefore, $X_{h-1}$ is a
$1$-tangentially weakly defective variety of minimal degree and by the
classification of the varieties of minimal degree \cite[Theorem
1]{EH87} $X_{h-1}$ must be a cone. In particular, $X_{h-1}$ is
$2$-defective that is $\tau^{X_{h-1}}_{\overline{x}}$ is of fiber
type. So $\tau_{x_1,\dots,x_{h-1},x}^X =
\tau_{\overline{x}}^{X_{h-1}}\circ\tau_{x_1,\dots,x_{h-1}}^{X}$ is
also of fiber type and hence $X$ is $(h+1)$-defective contradicting
the hypothesis on the finiteness of $\pi_{h+1}^X$.       
\end{Remark}

\section{Applications to toric varieties and varieties in tensor spaces}\label{sec4}

In this last section we apply Theorem \ref{mainB} to get identifiability results for many interesting classes of varieties expanding and completing \cite[Section 3]{CM19}. Most of the varieties that we will take into account are smooth and hence not $1$-tangentially weakly defective \cite[Remark 3.18]{Za93}.

\stepcounter{thm}
\subsection{Toric and Segre-Veronese varieties}
Let $N$ be a rank $n$ free abelian group, $M := {\rm Hom}(N,\mathbb Z)$ its dual and $M_{\mathbb Q} := M\otimes_{\mathbb Z}\mathbb Q$ the corresponding rational vector space. Let $P\subseteq M_{\mathbb Q}$ be a full-dimensional lattice polytope, that is the convex hull of finitely many points in $M$ which do not lie on a hyperplane. The polytope $P$ defines a polarized pair $(X_P,H)$ consisting of the toric variety $X_P$ together with a very ample Cartier
divisor $H$ of $X_P$. More precisely $X_P$ is the Zariski closure of the image of
the monomial map
$$
\begin{array}{lccc}
\phi_P: & (k^*)^n & \longrightarrow & \mathbb{P}^{N}\\
& u & \mapsto & [\chi^m(u)\, :\, m\in P\cap M]
\end{array}
$$
where $P\cap M = \{m_0,\dots,m_N\}$, $\chi^m(u)$ denotes the Laurent monomial in the variables $(u_1,\dots,u_n)$ defined by the point $m$, and $H$ is a hyperplane section of $X_P$. Given a subset $\{m_{i_0},\dots,m_{i_n}\}\subset P\cap M$ we will denote by $\Aff(m_{i_0},\dots,m_{i_n})$ the set of linear combinations $\sum_{j=0}^{n}a_jm_{i_j}$ with $a_j\in k$ and $\sum_{j}a_j = 1$. Set
$$
B = \{m_{i_0} +\dots + m_{i_n}\: | \: \Aff(m_{i_0},\dots,m_{i_n}) = M\otimes_{\mathbb Z} k\}
$$
and let $\left\langle B-B\right\rangle_{\mathbb{Q}}$ be the vector subspace of $M_{\mathbb{Q}}$ generated by $B-B = \{b-b'\: | \: b,b'\in B\}$. Finally, set $M' = \frac{M}{\left\langle B-B\right\rangle_{\mathbb{Q}}\cap M}$.

When the polytope $P\subset M_{\mathbb{Q}}$ is a product of simplexes we get a Segre-Veronese variety. For these varieties we will follow the notation established in the introduction.

\begin{thm}\label{toric}
Let $P\subseteq M_{\mathbb{Q}}$ be a full-dimensional lattice polytope, $X_P\subseteq\mathbb P^{|P\cap M|-1}$ the corresponding $n$-dimensional toric variety, and $m$ the maximum number of integer points in a hyperplane section of $P$. If $\rk(M') = n$ and
$$h < \left\lfloor\dfrac{|P\cap M|-m}{n+1}\right\rfloor$$
then $X_P$ is $h$-identifiable. In particular, if $SV_{\pmb{d}}^{\pmb{n}} \subset \mathbb{P}^{N(\pmb{n},\pmb{d})}$ is a Segre-Veronese variety and
$$h < \left\lfloor\frac{d_j}{n_j+d_j}\frac{1}{\sum_{i=1}^r n_i + 1}\prod_{i=1}^r \binom{n_i+d_i}{d_i}\right\rfloor,$$
where $\frac{n_j}{d_j} = \max_{1\leq i\leq r}\left\lbrace\frac{n_i}{d_i}\right\rbrace$, then $SV_{\pmb{d}}^{\pmb{n}}$ is $h$-identifiable.
\end{thm}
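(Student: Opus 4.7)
The plan is to apply Theorem~\ref{mainB} to $X=X_P$. Three hypotheses must be checked: the Gauss map of $X_P$ is non-degenerate, the codimension bound $(h+1)n+h\leq |P\cap M|-1$ holds, and $X_P$ is not $(h+1)$-defective.

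For the Gauss map, the assumption $\rk(M')=n$ is exactly the combinatorial criterion guaranteeing its non-degeneracy for a toric embedding; this is a direct computation with the monomial parametrization $\phi_P$ at a torus point, together with the definition of $M'$ via the lattice spanned by $B-B$. For the codimension inequality, the hypothesis $h+1\leq \lfloor (|P\cap M|-m)/(n+1)\rfloor$ yields $(h+1)(n+1)\leq |P\cap M|-m$, hence $(h+1)n+h\leq |P\cap M|-m-1\leq N-1$.

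The main obstacle is showing that $X_P$ is not $(h+1)$-defective. Here I would argue via Terracini's lemma combined with a specialization of the $h+1$ general points of $X_P$ onto a hyperplane section of $P$ realizing the maximum $m$ of lattice points. After such a specialization the spans of the tangent spaces can be analyzed by projecting away the $m$ coordinates supported on that hyperplane; the remaining $|P\cap M|-m$ coordinates must provide at least $(h+1)(n+1)$ independent linear constraints, which is exactly what the bound $h<\lfloor (|P\cap M|-m)/(n+1)\rfloor$ delivers in a Horace-style count. This is the template developed for toric varieties in \cite[Section 3]{CM19}, which extends verbatim to the present setting.

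For the Segre-Veronese specialization one takes $P=\Delta^{n_1}(d_1)\times\cdots\times\Delta^{n_r}(d_r)$, so that $|P\cap M|=\prod_i\binom{n_i+d_i}{d_i}$ and $n=\sum_i n_i$. A direct count shows that the maximum cardinality of a lattice hyperplane section of $P$ is $m=\frac{n_j}{n_j+d_j}\prod_i\binom{n_i+d_i}{d_i}$, where $j$ realizes $\frac{n_j}{d_j}=\max_i\frac{n_i}{d_i}$; inserting this value into the toric inequality reproduces the displayed bound. The Gauss map is non-degenerate since $SV_{\pmb{d}}^{\pmb{n}}$ is smooth \cite[Remark 3.18]{Za93}, and non $(h+1)$-defectiveness in the stated range is \cite[Theorem 1.1]{LMR20}. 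Theorem~\ref{mainB} then delivers the claimed identifiability.
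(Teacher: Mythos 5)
Your overall architecture coincides with the paper's: both reduce the statement to Theorem~\ref{mainB} (in the paper, to its technical form Theorem~\ref{main2}) by checking non-degeneracy of the Gauss map, the codimension bound, and non-$(h+1)$-defectiveness, and your codimension count is correct. The issue is how you discharge the two key hypotheses. For the Gauss map, the equivalence between $\rk(M')=n$ and non-degeneracy is not ``a direct computation with the monomial parametrization'': it is the main theorem of Furukawa--Ito, \cite[Theorem 1.1]{FI17}, which the paper cites, and your one-line justification does not reconstruct it. More seriously, your argument for non-$(h+1)$-defectiveness does not close. Specializing the $h+1$ points onto a hyperplane section realizing $m$ and discarding the $m$ coordinates supported there leaves $|P\cap M|-m$ coordinates, and the numerical hypothesis guarantees that there are at least $(h+1)(n+1)$ of them; but nothing in your sketch shows that the corresponding $(h+1)(n+1)$ rows of the Terracini matrix are \emph{independent} --- that independence is the entire content of the non-defectiveness statement, and it is exactly what \cite[Theorem 2.13]{LMR20} proves (via osculating projections and toric degenerations, not a verbatim extension of \cite[Section 3]{CM19}, which only contains applications of such bounds). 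As written, the ``Horace-style count'' assumes what it needs to prove.

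The Segre--Veronese specialization is right in spirit: your value of $m$ for the product of dilated simplices does reproduce the displayed bound, and smoothness gives non-degeneracy of the Gauss map as in \cite[Remark 3.18]{Za93}. But the claim that this $m$ is the \emph{maximum} over all lattice hyperplane sections (not only those parallel to a facet of a single factor) again requires proof; the paper sidesteps this by invoking \cite[Theorem 1.1]{LMR20} directly. If you replace your two sketches with citations to \cite[Theorem 1.1]{FI17} and \cite[Theorems 1.1 and 2.13]{LMR20}, your argument becomes the paper's proof.
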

\begin{proof}
By \cite[Theorem 2.13]{LMR20} $X_P$ is not $h$-defective for $h\leq\dfrac{|P\cap M|-m}{n+1}$. Furthermore, by \cite[Theorem 1.1]{FI17} $X_P$ is $1$-tangentially weakly defective if and only if $\rk(M') < n$. Hence, to conclude it is enough to apply Theorem \ref{mainB}. The claim on $SV_{\pmb{d}}^{\pmb{n}}$ follows from \cite[Theorem 1.1]{LMR20} and Theorem \ref{mainB}.
\end{proof}

\begin{proof}[Proof of Theorem~\ref{SVP1}]
By \cite[Theorem 3.1]{LP13} $SV_{\pmb{d}}^{(1,\dots,1)} \subset \mathbb{P}^{N(\pmb{n},\pmb{d})}$ is not $h$-defective except when 
$$(d_1,\dots,d_r;h) \in \{(2,2a;2a+1),(1,1,2a;2a+1),(2,2,2;7),(1,1,1,1;3)\}$$
with $a\geq 1$. Hence, thanks to the numerical hypothesis, \cite[Theorem 3.1]{LP13} and Theorem
\ref{mainB} it is enough to study the identifiability in the  following cases:
$$(d_1,\dots,d_r;h) \in \{(2,2a;2a),(1,1,2a;2a),(1,1,1,1;2)\}$$
for $a\geq 1$. By the proof of \cite[Proposition 7.1]{LP13} the linear system of divisors of bidegree $(2,2a)$ on $\mathbb{P}^1\times\mathbb{P}^1$ with $2a+1$ general double points has a unique section which is a smooth rational curve. Hence, the linear system of divisors of bidegree $(2,2a)$ in $\mathbb{P}^1\times\mathbb{P}^1$ with $2a$ general double points does not have any base component. In particular, $SV_{(2,2a)}^{(1,1)}$ is not $2a$-tangentially weakly defective and hence it is $2a$-identifiable. 

Again by the proof of \cite[Proposition 7.1]{LP13} the linear system of divisors of multidegree $(1,1,2a)$ on $\mathbb{P}^1\times\mathbb{P}^1\times\mathbb{P}^1$ with $2a+1$ general double points has a unique section which is the union of a section of the linear system of divisors of multidegree $(1,0,a)$ passing through the $2a+1$ points and a section of the linear system of divisors of multidegree $(0,1,a)$ passing through the $2a+1$ points. Therefore, the contact locus of the span of $2a$ general tangent space of $SV_{(1,1,2a)}^{(1,1,1)}$ is $0$-dimensional. So $SV_{(1,1,2a)}^{(1,1,1)}$ is not $2a$-tangentially weakly defective and hence it is $2a$-identifiable. 

Finally, consider $SV_{(1,1,1,1)}^{(1,1,1,1)}$. In this case a direct computation shows that the span of two general tangent spaces intersects $SV_{(1,1,1,1)}^{(1,1,1,1)}$ along eight lines, four of them through each point, and is tangent to $SV_{(1,1,1,1)}^{(1,1,1,1)}$ just at the two chosen points. So $SV_{(1,1,1,1)}^{(1,1,1,1)}$ is not $2$-tangentially weakly defective and hence it is $2$-identifiable.

To conclude observe that when $d_i\geq 3$ the variety $SV_{\bf d}^{(1,\ldots,1)}$ is never generically identifiable by \cite[Theorem 30]{CM21}.
\end{proof}

\begin{Remark}\label{SVn6id}
Consider now the case $(d_1,\dots,d_r;h) = (2,2,2;6)$. A direct computation shows that the span of six general tangent spaces is tangent to $SV_{(2,2,2)}^{(1,1,1)}$ along a degree twelve elliptic normal curve $\Gamma_6\subset\mathbb{P}^{11}$. By Lemma \ref{CLoc} (c) the $6$-secant degree of $SV_{(2,2,2)}^{(1,1,1)}$ is equal to that of $\Gamma_6$ which by \cite[Proposition 5.2]{CC06} is two. In particular, $SV_{(2,2,2)}^{(1,1,1)}$ is not $2$-identifiable. 
\end{Remark}

\stepcounter{thm}
\subsection{Grassmannians and Flag varieties}
Fix a vector space $V$ of dimension $n+1$ and integers $k_1\leq\dots\leq k_r$. Let $\mathbb{G}(k_i,n)\subset\mathbb{P}^{N_i}$, where $N_i={{n+1}\choose{k_i+1}}-1$, be the Grassmannian of $k_i$-dimensional linear subspaces of $\mathbb{P}(V)$ in its Pl\"ucker embedding. We have an embedding of the product of these Grassmannians 
$$\mathbb{G}(k_1,n)\times\dots\times\mathbb{G}(k_r,n)\subset\mathbb{P}^{N_1}\times\dots\times\mathbb{P}^{N_r}\subset\mathbb{P}^N$$
where $N={{n+1}\choose{k_1+1}}\cdots{{n+1}\choose{k_r+1}}-1$.

The flag variety $\F(k_1,\dots,k_r;n)$ is the set of flags, that is nested subspaces,
$V_{k_1}\subset\cdots\subset V_{k_r}\subsetneq V$. This is a subvariety of the product of Grassmannians $\prod_{i=1}^r\G(k_i,n)$. Hence, via a product of Pl\"ucker embeddings followed by a Segre embedding we get and embedding
$$\F(k_1,\dots,k_r;n)\hookrightarrow \mathbb{P}^{N_1}\times\dots\times\mathbb{P}^{N_r}\hookrightarrow\mathbb{P}^N.$$

Consider natural numbers $a_1,\dots,a_{n}$ such that $a_{k_1+1}=\cdots=a_{k_r+1}=1$ and $a_i=0$ for all $i\notin\{k_1+1,\dots,k_r+1\}$. Then, $\F(k_1,\dots,k_r;n)$ generates the subspace 
$$\mathbb{P}(\Gamma_{a_1,\dots,a_{n}})\subseteq \mathbb{P}\left(\bigwedge^{k_1+1}V\otimes\cdots\otimes\bigwedge^{k_r+1}V\right)\subseteq \mathbb{P}^N$$
where $\Gamma_{a_1,\dots,a_{n}}$ is the irreducible representation of $\mathfrak{sl}_{n+1} \C$ with highest weight $(a_1+\cdots+a_{n})L_1+\cdots+a_{n}L_{n}$, and $L_1+\dots + L_k$ is the highest weight of the irreducible representation $\bigwedge^{k}V$. We will denote $\Gamma_{a_1,\dots,a_{n}}$ simply by $\Gamma_a$. By the Weyl character formula we have that 
$$\dim \mathbb{P}(\Gamma_{a}) = \prod_{1\leq i < j\leq n+1}\frac{(a_i+\dots+a_{j-1})+j-i}{j-i}-1.$$
Furthermore, $\dim \F(k_1,\dots,k_r;n) = (k_1+1)(n-k_1)+\sum_{j=2}^r(n-k_j)(k_j-k_{j-1})$ and $\F(k_1,\dots,k_r;n)=\mathbb{P}(\Gamma_a)\cap \prod_{i=1}^r\G(k_i,n)\subset \mathbb{P}^N$.

Since Grassmannians of lines are always defective when dealing with $\mathbb{G}(r,n)$ we will assume that $r\geq 2$. 

\begin{thm}\label{flag_G}
Consider a flag variety $\F(k_1,\dots,k_r;n)$. Assume that $n\geq 2k_{j}+1$ for some index $j$ and let $l$ be the maximum among these $j$. Then, for 
$$h < \left\lfloor\left(\frac{n+1}{k_l+1}\right)^{\lfloor \log_2(\sum_{j=1}^l k_j+l-1)\rfloor}\right\rfloor$$
$\F(k_1,\dots,k_r;n)$ is $h$-identifiable. In particular, for 
$$
h < \left\lfloor\left(\frac{n+1}{r+1}\right)^{\lfloor\log_2(r)\rfloor}\right\rfloor
$$
the Grassmannian $\mathbb{G}(r,n)\subset\mathbb{P}^N$, in its Pl\"ucker embedding, is $h$-identifiable. Furthermore, if $r\geq 2$, $h\leq 12$ and
$$
h < \left\lfloor \frac{\binom{n+1}{r+1}}{(n-r)(r+1)+1}\right\rfloor
$$
then $\mathbb{G}(r,n)\subset\mathbb{P}^N$ is $h$-identifiable except for the case $(r,n;h) = (3,7;3)$ in which the corresponding Grassmannian is $3$-defective. 
\end{thm}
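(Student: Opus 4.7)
The plan is to deduce the theorem from Theorem~\ref{main2}. Both the flag variety $\F(k_1,\dots,k_r;n)$ and the Plücker embedded Grassmannian $\mathbb{G}(r,n)\subset\mathbb{P}^N$ are smooth projective homogeneous varieties, hence by \cite[Remark 3.18]{Za93} their Gauss maps are non-degenerate and they are not $1$-tangentially weakly defective. So, as in the proof of Theorem~\ref{toric}, it suffices to ensure that $\pi_{h+1}^X$ is generically finite, which breaks into (a) non-$(h+1)$-defectiveness and (b) the dimensional inequality $(h+1)\dim X + h \leq N$.

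For the first part of the statement I would invoke the known non-defectiveness bound for flag varieties in the announced log-range; this is a ``recursive projection/flattening'' type result and is precisely of the form needed to feed into Theorem~\ref{main2}. The Grassmannian bound $h < \lfloor(\frac{n+1}{r+1})^{\lfloor\log_2 r\rfloor}\rfloor$ then follows by specializing the flag bound to $\F(r;n)$ with $l=1$ and $k_1=r$, so that $\sum_{j=1}^{l}k_j+l-1=r$. In both ranges, the dimensional condition $(h+1)\dim X + h \leq N$ is a direct computation from $\dim\mathbb{G}(r,n)=(n-r)(r+1)$ and $N+1=\binom{n+1}{r+1}$, and is implied by the same bound on $h$.

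For the third part I would use the classification of defective Grassmannians for small $h$: non-$(h+1)$-defectiveness of $\mathbb{G}(r,n)$ holds for all $h\leq 12$ except for $(r,n;h)=(3,7;3)$, which is the only exception we need to record. The stated numerical bound $h < \binom{n+1}{r+1}/((n-r)(r+1)+1)$ is exactly the rearrangement of $(h+1)((n-r)(r+1)+1)\leq \binom{n+1}{r+1}$, i.e.\ $(h+1)\dim\mathbb{G}(r,n) + h \leq N$, so both hypotheses of Theorem~\ref{main2} are met and $h$-identifiability follows.

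The proof is therefore a bookkeeping exercise built on top of Theorem~\ref{main2}: the conceptual input is the smoothness (hence non-degeneracy of the Gauss map), while the quantitative input is the non-defectiveness bound. The main obstacle is citing the correct non-defectiveness statements in the ranges announced, especially the log-type bound for general flag varieties; once those are in place, the matching of numerics to the dimensional inequality and the application of Theorem~\ref{main2} are straightforward.
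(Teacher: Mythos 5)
Your overall strategy (smoothness of the homogeneous varieties rules out $1$-tangential weak defectiveness, then feed a non-defectiveness bound plus the dimensional inequality into Theorem~\ref{main2}) is exactly the paper's route for the first two claims, which are indeed proved by citing the non-defectiveness results of \cite{MR19} and \cite[Theorem 1.1]{BCM22} and applying Theorem~\ref{main2}.

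The third claim, however, has a genuine gap in your argument. The classification of defective Grassmannians for secant level at most $12$ (\cite[Theorem 1.1]{Bo13}) has \emph{four} exceptions, namely $(r,n;s)\in\{(2,6;3),(3,7;3),(3,7;4),(2,8;4)\}$, not just $(3,7;3)$. Since $h$-identifiability via Theorem~\ref{main2} requires non-$(h+1)$-defectiveness, the problematic pairs inside the stated numerical range $h<\bigl\lfloor \binom{n+1}{r+1}/((n-r)(r+1)+1)\bigr\rfloor$ are $(3,7;2)$, $(3,7;3)$ and $(2,8;3)$: for $h=2$ on $\mathbb{G}(3,7)$ one needs non-$3$-defectiveness, which fails, and for $h=3$ on $\mathbb{G}(2,8)$ one needs non-$4$-defectiveness, which also fails. (The case $(2,6;2)$ falls outside the numerical bound, since $35/13<3$ forces $h\leq 1$.) Only $(3,7;3)$ is excluded in the statement; the cases $(3,7;2)$ and $(2,8;3)$ are asserted to be identifiable, but Theorem~\ref{main2} simply does not apply to them, so they must be settled by an independent argument --- the paper does this by invoking \cite[Theorem 1.1]{BV18}. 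Your claim that ``non-$(h+1)$-defectiveness holds for all $h\leq 12$ except $(3,7;3)$'' conflates $h$-defectiveness with $(h+1)$-defectiveness and leaves these two cases unproven; as written, your proof of the third claim would not go through.
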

\begin{proof}
The first claim follows from the main theorem in \cite{MR19}, \cite[Theorem 1.1]{BCM22} and Theorem \ref{mainB}. By \cite[Theorem 1.1]{Bo13} if $r\geq 2$ and $h\leq 12$ the Grassmannian $\mathbb{G}(r,n)\subset\mathbb{P}^N$ is not $h$-defective with the following exceptions:
$$
(r,n;h) \in \{(2,6;3),(3,7;3),(3,7;4),(2,8;4)\}.
$$
Therefore, thanks to Theorem \ref{mainB} it is enough to check the cases $(r,n;h) \in \{(3,7;2),(2,8;3)\}$ whose identifiability follows from \cite[Theorem 1.1]{BV18}.
\end{proof}

For Grassmannians of planes we provide a better bound. 

\begin{Proposition}\label{Gr2}
Consider the Grassmannian $\mathbb{G}(2,n)\subset\mathbb{P}^N$, in its Pl\"ucker embedding. If $n\geq 9$ and
$$
h < \left\lfloor \frac{n^2}{18}-\frac{20n}{27}+\frac{287}{81}\right\rfloor + \left\lfloor \frac{6n-13}{9}\right\rfloor
$$
then $\mathbb{G}(2,n)$ is $h$-identifiable. 
\end{Proposition}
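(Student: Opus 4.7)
The plan is to reduce the statement to a non-defectiveness estimate and then invoke Theorem \ref{main2}. Since $\mathbb{G}(2,n)\subset\mathbb{P}^N$ is smooth, \cite[Remark 3.18]{Za93} guarantees that its Gauss map is non-degenerate, so it is not $1$-tangentially weakly defective. Hence Theorem \ref{main2} reduces the proof of $h$-identifiability to verifying two conditions: the numerical inequality $(h+1)\dim\mathbb{G}(2,n)+h\leq N$ and the non-$(h+1)$-defectiveness of $\mathbb{G}(2,n)$.

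The numerical inequality is essentially automatic: with $\dim\mathbb{G}(2,n)=3(n-2)$ and $N=\binom{n+1}{3}-1$ it rewrites as $h(3n-5)\leq\binom{n+1}{3}-3n+5$. Since the bound on $h$ in the statement is of order $n^2/18$ while the right-hand side is of order $n^3/6$, this holds comfortably for $n\geq 9$, and the finitely many borderline values can be settled by direct numerical check.

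All the content of the proof therefore sits in the non-defectiveness estimate. The plan is to establish, by induction on $n$ combined with a Horace-style specialization of Terracini's Lemma \cite{Te11}, in the spirit of the arguments producing the bounds in Theorem \ref{flag_G} and in \cite{Bo13}, \cite{MR19}, \cite{BCM22}, that $\mathbb{G}(2,n)$ is not $k$-defective for every $k$ at most the sum appearing in the proposition. I expect the two floor terms to correspond to two nested steps of the induction: the leading term $\lfloor n^2/18-20n/27+287/81\rfloor$ captures the asymptotic $n^2/18$ behaviour coming from the main specialization, while the secondary $\lfloor(6n-13)/9\rfloor$ arises as a linear correction from a finer choice of infinitesimal neighbourhoods on the specialized Grassmannian. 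Applying the resulting non-defectiveness statement with $k=h+1$ together with Theorem \ref{main2} then yields $h$-identifiability.

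The main obstacle is precisely this refined non-defectiveness bound, whose exact shape depends sensitively on the combinatorics of the degeneration used to break the induction base and on keeping track of the dimensions of the specialized linear systems attached to the tangent spaces of $\mathbb{G}(2,n)$. Once that bound is in place, the Gauss map condition, the dimension inequality, and the deduction of identifiability from non-defectiveness through Theorem \ref{main2} are all essentially formal.
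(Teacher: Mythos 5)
Your reduction is the same as the paper's: since $\mathbb{G}(2,n)$ is smooth it is not $1$-tangentially weakly defective, so by Theorem \ref{main2} it suffices to know that $\pi_{h+1}^{\mathbb{G}(2,n)}$ is generically finite, i.e.\ that $\mathbb{G}(2,n)$ is not $(h+1)$-defective in the relevant range (the dimension count $(h+1)\cdot 3(n-2)+h\leq\binom{n+1}{3}-1$ being harmless). That part of your argument is correct and matches the paper exactly.

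The genuine gap is that you never actually establish the non-defectiveness estimate, and you yourself flag it as ``the main obstacle.'' The specific bound
$\left\lfloor \frac{n^2}{18}-\frac{20n}{27}+\frac{287}{81}\right\rfloor + \left\lfloor \frac{6n-13}{9}\right\rfloor$
is not something one conjures by a routine Horace-style induction sketch: it is precisely the non-defectivity theorem for Grassmannians of planes of Abo--Ottaviani--Peterson, \cite[Theorem 1.5]{AOP12}, which the paper simply cites and then combines with Theorem \ref{main2}. Your proposal in effect promises to reprove the main theorem of that paper, whose argument (an induction on $n$ with a delicate specialization and careful bookkeeping of the two floor terms) occupies an entire article; asserting that ``the two floor terms should correspond to two nested steps of the induction'' is a plausible guess about the shape of that proof, not a proof. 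As written, the proposal is therefore incomplete at its central step; it becomes correct (and identical to the paper's one-line proof) once the non-defectiveness input is obtained by citation rather than left as a to-do.
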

\begin{proof}
It is enough to apply \cite[Theorem 1.5]{AOP12} and Theorem \ref{mainB}.
\end{proof}

\stepcounter{thm}
\subsection{Lagrangian Grassmannians and Spinor varieties}
Let $V$ be a vector space endowed with a non-degenerate quadratic form $Q$ or, when $\dim(V)$ is even, with a non-degenerated symplectic form $\omega$. For $r\leq\frac{\dim(V)}{2}$ the isotropic Grassmannians $\mathcal{G}_{Q}(r,V), \mathcal{G}_{\omega}(r,V)$ are the subvarieties of the Grassmannian $\mathcal{G}(r,V)$ parametrizing $r$-dimensional subspaces of $V$ that are isotropic with respect to $Q$ and $\omega$ respectively.

All isotropic Grassmannians, with the exception of the symmetric case when $\dim(V) = 2n$ is even and $r = n$, are irreducible. In the exceptional case the isotropic Grassmannian $\mathcal{G}_{Q}(n,V)$ has two connected components each one parametrizing the linear subspaces in one of the two families of $(n-1)$-planes of $\mathbb{P}(V)$ contained in the smooth quadric hypersurface in $\mathbb{P}(V)$ defined by $Q$. Either of these two isomorphic components is called the $\frac{n(n-1)}{2}$-dimensional Spinor variety and denoted by $\mathcal{S}_n$.

The restriction of the Pl\"ucker embedding of $\mathcal{G}(n,V)$ induces an embedding $\mathcal{S}_n\rightarrow\mathbb{P}(\bigwedge^nV_{+})$. However, this is not the minimal homogeneous embedding of $\mathcal{S}_n$ that we will denote by $\mathcal{S}_n\rightarrow\mathbb{P}(\Delta)$ and refer to as the Spinor embedding. The Pl\"ucker embedding of $\mathcal{S}_n$ can be obtained by composing the Spinor embedding with the degree two Veronese embedding. 

In the skew-symmetric case, again when $d = 2n$ is even and $r = n$, the isotropic Grassmannian $\mathcal{G}_{\omega}(n,V)$ is called the $\frac{n(n+1)}{2}$-dimensional Lagrangian Grassmannian and denoted by $\mathcal{LG}(n,2n)$. Unlike the case of the Spinor variety, the restricting of the Pl\"ucker embedding of $\mathcal{G}(n,V)$ yields the minimal homogeneous embedding of $\mathcal{LG}(n,2n)$ that we will denote by $\mathcal{LG}(n,2n)\rightarrow\mathbb{P}(V_{\omega_n})$.

\begin{thm}\label{LGS}
The Lagrangian Grassmannian $\mathcal{LG}(n,2n)\subset\mathbb{P}(V_{\omega})$, in its Pl\"ucker embedding, is $h$-identifiable for $h < \left\lfloor\frac{n+1}{2}\right\rfloor$. 

Furthermore, the Spinor variety $\mathcal{S}_n\subset\mathbb{P}(\bigwedge^nV_{+})$, in its Pl\"ucker embedding, is $h$-identifiable for $h<\left\lfloor\frac{n}{2}\right\rfloor$. Finally, the Spinor variety $\mathcal{S}_n\subset\mathbb{P}(\Delta)$, in its Spinor embedding, is $h$-identifiable for $h < \left\lfloor\frac{n+2}{4}\right\rfloor$.
\end{thm}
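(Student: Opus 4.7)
The plan is to reduce all three statements to an immediate application of Theorem \ref{main2}. Since both the Lagrangian Grassmannian $\mathcal{LG}(n,2n)$ and the Spinor variety $\mathcal{S}_n$ are smooth homogeneous varieties, by \cite[Remark 3.18]{Za93} (quoted at the opening of Section \ref{sec4}) they have non-degenerate Gauss map, i.e.\ they are not $1$-tangentially weakly defective. Consequently, Theorem \ref{main2} reduces the identifiability question to a non-defectiveness question: to prove $h$-identifiability it suffices to check that $\pi_{h+1}^X$ is generically finite, i.e.\ that $X$ is not $(h+1)$-defective.

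For each of the three embeddings the plan is then to quote a corresponding non-defectiveness result from the literature and to check that the numerical bound in the statement is precisely the shift by one of the known non-defectiveness bound. For $\mathcal{LG}(n,2n)\subset\mathbb{P}(V_{\omega_n})$ in its Pl\"ucker embedding I would invoke the non-defectiveness results obtained via osculating/degeneration techniques for Lagrangian Grassmannians (e.g.\ of Boralevi--Carlini--Mella type), which establish non $s$-defectiveness for $s\leq \lfloor (n+1)/2\rfloor$; setting $s=h+1$ gives the stated range $h<\lfloor(n+1)/2\rfloor$. Similarly for the Spinor variety $\mathcal{S}_n$, I would cite the non-defectiveness bounds known in the Pl\"ucker and in the Spinor embeddings respectively (again coming from inductive tangent-space/osculating arguments) to obtain the ranges $h<\lfloor n/2\rfloor$ and $h<\lfloor(n+2)/4\rfloor$ after the shift.

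The complete proof is then a one-line application: under the numerical hypotheses the variety $X$ is not $(h+1)$-defective and not $1$-tangentially weakly defective, so Theorem \ref{main2} gives that $X$ is $h$-identifiable. The only genuinely delicate point, and the one I would have to verify carefully, is that the non-defectiveness statements available in the literature for $\mathcal{LG}(n,2n)$ and $\mathcal{S}_n$ (in both embeddings) actually cover the ranges claimed; this is essentially bookkeeping of the existing bounds but must be done for each embedding separately, paying attention to the fact that the Spinor embedding sits via a Veronese in the Pl\"ucker one, which is what accounts for the shift between the bound $\lfloor n/2\rfloor$ (Pl\"ucker) and $\lfloor(n+2)/4\rfloor$ (Spinor). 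No further geometric input is required, since the heavy lifting has already been done by Theorem \ref{main2} and Zak's theorem on Gauss maps of smooth projective varieties.
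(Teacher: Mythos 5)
Your proposal is correct and follows exactly the paper's own argument: the paper's proof is the one-line application of Theorem \ref{main2} combined with the non-defectiveness bounds of \cite[Theorem 1.1]{FMR20}, which is the single reference covering all three embeddings (rather than the Boralevi--Carlini--Mella-type results you guessed), with smoothness ruling out $1$-tangential weak defectiveness via \cite[Remark 3.18]{Za93}. The ``delicate bookkeeping'' you flag is precisely what that citation supplies, so no further work is needed.
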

\begin{proof}
It is enough to apply \cite[Theorem 1.1]{FMR20} and Theorem \ref{mainB}.
\end{proof}

For small values of $h$ a little improvement is at hand. 

\begin{Proposition}\label{LGS2}
If $n\geq 5$ then the Lagrangian Grassmannian $\mathcal{LG}(n,2n)\subset\mathbb{P}(V_{\omega})$, in its Pl\"ucker embedding, is $h$-identifiable for $h \leq 3$. 

Furthermore, the Spinor variety $\mathcal{S}_n\subset\mathbb{P}(\Delta)$, in its Spinor embedding, is $2$-identifiable for all $n\notin\{7,8\}$.
\end{Proposition}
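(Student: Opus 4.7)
The plan is to reduce both statements to Theorem \ref{main2} by checking that the two hypotheses of that theorem are met. Since $\mathcal{LG}(n,2n)$ and $\mathcal{S}_n$ are smooth homogeneous varieties, by \cite[Remark 3.18]{Za93} neither of them is $1$-tangentially weakly defective, so only the finiteness of the appropriate secant projection $\pi_{h+1}^X$ must be verified. Equivalently, one has to exhibit non $(h+1)$-defectivity in each case.

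For the Lagrangian Grassmannian statement with $h\leq 3$, the cases $n\geq 7$ are already subsumed by Theorem \ref{LGS}: that result gives $h$-identifiability whenever $h<\lfloor\tfrac{n+1}{2}\rfloor$, which covers $h=3$ as soon as $n\geq 7$. The only genuinely new cases are therefore $(n,h)=(5,3)$ and $(n,h)=(6,3)$, and in these two cases I would invoke the explicit (and well-documented) secant dimension computations for $\mathcal{LG}(5,10)\subset\mathbb{P}^{41}$ and $\mathcal{LG}(6,12)\subset\mathbb{P}^{131}$ contained in \cite{FMR20}, which say in particular that neither variety is $4$-defective. Once this is established, Theorem \ref{main2} applied with $h=3$ concludes the first assertion.

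For the Spinor statement, the target is $2$-identifiability, so the hypothesis to be checked is that $\mathcal{S}_n\subset\mathbb{P}(\Delta)$ is not $3$-defective. Again, the range $n\geq 10$ is immediate from Theorem \ref{LGS}, since $h=2<\lfloor\tfrac{n+2}{4}\rfloor$ holds for such $n$. The remaining values, excluding the degenerate cases $n=7,8$ (for which $\mathcal{S}_n$ is genuinely $3$-defective in the Spinor embedding, hence must be excluded from the statement), are $n=5,6,9$; for these three exceptional spinor varieties one again reads off non $3$-defectivity from the tables/results of \cite{FMR20}, after which Theorem \ref{main2} with $h=2$ applies.

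The only real obstacle is the small, case-by-case verification of non-defectivity in the boundary cases $(n,h)=(5,3),(6,3)$ for $\mathcal{LG}$ and $n\in\{5,6,9\}$ for $\mathcal{S}_n$; everything else is a direct quotation of Theorem \ref{main2} together with the smoothness (hence non $1$-tangential weak defectivity) of these homogeneous varieties.
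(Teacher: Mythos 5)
Your strategy is exactly the paper's: both claims are reduced to Theorem \ref{main2} by noting that these homogeneous varieties are smooth, hence not $1$-tangentially weakly defective, so everything hinges on non-$(h+1)$-defectivity. Your reduction of the problem to the boundary cases $(n,h)=(5,3),(6,3)$ for $\mathcal{LG}(n,2n)$ and $n\in\{5,6,9\}$ for $\mathcal{S}_n$ via Theorem \ref{LGS} is correct arithmetic. The one weak point is precisely the step you yourself flag as ``the only real obstacle'': you attribute the non-defectivity in those boundary cases to the tables of \cite{FMR20}, but \cite{FMR20} is the source of the asymptotic bounds in Theorem \ref{LGS}, which by your own reduction do \emph{not} reach these cases; there is no guarantee that paper contains the required explicit secant-dimension computations for $\mathcal{LG}(5,10)$, $\mathcal{LG}(6,12)$, $\mathcal{S}_5$, $\mathcal{S}_6$, $\mathcal{S}_9$. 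The paper instead quotes the dedicated low-secant results: \cite[Theorem 1.1]{BB11} (non-defectivity of small secant varieties of Lagrangian Grassmannians, valid for $n\geq 5$) and \cite[Theorem 1.1]{An11} (non-$3$-defectivity of spinor varieties in the spinor embedding for all $n\notin\{7,8\}$), applied uniformly without any case split. So the facts you need are true and in the literature, but you have pointed at the wrong reference for them; substituting \cite{BB11} and \cite{An11} closes the argument, and in fact makes the reduction through Theorem \ref{LGS} unnecessary.
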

\begin{proof}
The claim about $\mathcal{LG}(n,2n)$ is a consequence of \cite[Theorem 1.1]{BB11} and Theorem \ref{mainB}, and the claim on $\mathcal{S}_n$ follows from \cite[Theorem 1.1]{An11} and Theorem \ref{mainB}.
\end{proof}

\stepcounter{thm}
\subsection{Gaussian moment varieties}
The Gaussian moment variety $\mathcal{G}_{1,d}\subset\mathbb{P}^d$ parametrizes the vectors of all moments of degree at most $d$ of a $1$-dimensional Gaussian distribution. By \cite[Corollary 2]{AFS16} $\mathcal{G}_{1,d}$ is a surface for all $d$.

\begin{thm}\label{gauss}
If $h < \lfloor\frac{d+1}{3}\rfloor$ then the Gaussian moment surface $\mathcal{G}_{1,d}$ is $h$-identifiable. 
\end{thm}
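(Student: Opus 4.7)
The plan is to derive this as a direct application of Theorem \ref{mainB} to $X = \mathcal{G}_{1,d} \subset \mathbb{P}^d$. Since $\mathcal{G}_{1,d}$ is a surface (by \cite[Corollary 2]{AFS16}), we set $n = 2$ and $N = d$, so the numerical hypothesis $(h+1)n + h \leq N$ of Theorem \ref{mainB} reads $3h + 2 \leq d$. A case-by-case check on the residues of $d$ modulo $3$ shows that for integer $h$ this is exactly equivalent to $h < \lfloor (d+1)/3 \rfloor$, matching the bound in the statement. It therefore suffices to verify the two remaining hypotheses of Theorem \ref{mainB}: that $\mathcal{G}_{1,d}$ has non-degenerate Gauss map, and that it is not $(h+1)$-defective.

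For the first, I would use the explicit polynomial parametrization of $\mathcal{G}_{1,d}$ in the Gaussian parameters $(\mu,\sigma^2)$ recalled in \cite{AFS16}. A direct Jacobian computation shows that the general tangent plane to $\mathcal{G}_{1,d}$ meets it in a finite scheme; equivalently, $\mathcal{G}_{1,d}$ is smooth on a dense open set and is not developable, so its Gauss map is generically finite and in particular non-degenerate (equivalently, $\mathcal{G}_{1,d}$ is not $1$-tangentially weakly defective).

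For the second, note that the generic rank of $\mathcal{G}_{1,d}$ is $\lceil (d+1)/3\rceil$, so the condition $h+1 \leq \lfloor (d+1)/3 \rfloor$ places us strictly in the subgeneric range. I would then invoke the known non-defectivity results for the secant varieties of $\mathcal{G}_{1,d}$ in this range (coming from \cite{AFS16} and the subsequent work of Sturmfels and collaborators on moment varieties), which give the required non-$(h+1)$-defectivity.

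With both conditions established, Theorem \ref{mainB} (equivalently Theorem \ref{main2}) applies and yields $h$-identifiability of $\mathcal{G}_{1,d}$. The main potential obstacle is step (ii): if the available literature does not cover the full subgeneric range, one must fall back on a direct Terracini computation, writing down the tangent space at a general point of $\mathcal{G}_{1,d}$ via the explicit parametrization and showing that the span of $h+1$ such tangent spaces has the expected dimension $3(h+1) - 1$, for instance via a Hilbert-function argument on the scheme of $(h+1)$ double points supported at general values of $(\mu,\sigma^2)$.
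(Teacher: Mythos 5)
Your proposal is correct and follows exactly the paper's route: reduce to Theorem \ref{mainB}/\ref{main2} by checking the numerical condition $3h+2\leq d$, non-degeneracy of the Gauss map, and non-defectiveness. The paper simply pins down your steps (i) and (ii) with citations — \cite[Example 5.8]{BBC} for the fact that $\mathcal{G}_{1,d}$ is not $1$-tangentially weakly defective, and \cite[Theorem 1]{ARS18} for the fact that $\mathcal{G}_{1,d}$ is not $h$-defective for \emph{any} $h$ — so neither the Jacobian computation nor the fallback Terracini argument is needed.
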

\begin{proof}
By \cite[Theorem 1]{ARS18} the surface $\mathcal{G}_{1,d}$ is not $h$-defective for all $h$, and by \cite[Example 5.8]{BBC} it is not $1$-tangentially weakly defective. As usual to conclude we apply Theorem \ref{mainB}.
\end{proof}

\stepcounter{thm}
\subsection{Varieties of powers of forms}
Let $\mathcal{V}_{a,d,n}\subset\mathbb{P}(k[x_0,\dots,x_n]_{ad})$ be the variety parametrizing $d$-powers of homogeneous polynomials of degree $a$. The defectiveness and identifiability of $\mathcal{V}_{a,d,n}$ have recently been studied in \cite{BCOM22}. Thanks to Theorem \ref{mainB} we get the following improvement on \cite[Corollary 4.4]{BCOM22}:

\begin{Proposition}
If $h \leq \frac{\binom{ad+n}{n}}{\binom{a+n}{n}}-\binom{a+n}{n}-1$ then $\mathcal{V}_{a,d,n}$ is $h$-identifiable. 
\end{Proposition}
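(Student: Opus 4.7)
The plan is to deduce the statement as a direct application of Theorem~\ref{main2} (equivalently Corollary~\ref{cor:Bro_mio_vera}), exactly in the same spirit as the applications in this section. Three ingredients must be verified: the numerical hypothesis $(h+1)\dim(\mathcal{V}_{a,d,n})+h\leq N$, the non-$(h+1)$-defectiveness of $\mathcal{V}_{a,d,n}$, and the non-degeneracy of its Gauss map.

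First, note that $\dim\mathcal{V}_{a,d,n}=\binom{a+n}{n}-1$ (the $d$-power map $[f]\mapsto[f^d]$ from $\mathbb{P}(k[x_0,\dots,x_n]_a)$ is finite-to-one) and $N=\binom{ad+n}{n}-1$. A direct manipulation shows that the numerical condition $(h+1)(\binom{a+n}{n}-1)+h\leq\binom{ad+n}{n}-1$ is equivalent to $h+1\leq\binom{ad+n}{n}/\binom{a+n}{n}$, which is implied, with ample room, by our hypothesis $h\leq\binom{ad+n}{n}/\binom{a+n}{n}-\binom{a+n}{n}-1$.

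Second, the non-defectiveness step uses the result from \cite{BCOM22} that underlies their Corollary~4.4: there the authors establish that $\mathcal{V}_{a,d,n}$ is not $k$-defective whenever $k\leq\binom{ad+n}{n}/\binom{a+n}{n}-\binom{a+n}{n}$. Taking $k=h+1$, this is precisely our hypothesis, so $\mathcal{V}_{a,d,n}$ is not $(h+1)$-defective. This is exactly the kind of improvement gained from Theorem~\ref{main2}: the identifiability statement \cite[Corollary 4.4]{BCOM22} loses a unit because it requires non-$h$-defectiveness for the larger value $h+1$, while our approach re-invests that unit.

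Third, I would verify that $\mathcal{V}_{a,d,n}$ is not $1$-tangentially weakly defective. The affine cone over the tangent space at $[f^d]$ is $f^{d-1}\cdot k[x_0,\dots,x_n]_a$. The common factor of all elements in this linear system is $f^{d-1}$, so from the tangent space one recovers $[f^{d-1}]$ and hence $[f]$, meaning the Gauss map $[f^d]\mapsto T_{[f^d]}\mathcal{V}_{a,d,n}$ is generically injective. In particular it is non-degenerate. With all three hypotheses of Theorem~\ref{main2} verified, the conclusion $h$-identifiability follows. The only mildly non-routine point is the Gauss map check; everything else is a bookkeeping translation of the bound from \cite{BCOM22}.
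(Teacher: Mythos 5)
Your proposal is correct and follows the same overall route as the paper: all three of us reduce the statement to Theorem~\ref{main2} via the non-defectiveness bound of \cite{BCOM22} (the paper cites \cite[Theorem 4.3]{BCOM22} for exactly the range $h+1\leq \binom{ad+n}{n}/\binom{a+n}{n}-\binom{a+n}{n}$ that you invoke), and your bookkeeping for the inequality $(h+1)\dim(\mathcal{V}_{a,d,n})+h\leq N$ is accurate. The one place where you genuinely diverge is the verification that $\mathcal{V}_{a,d,n}$ is not $1$-tangentially weakly defective: the paper disposes of this by citing \cite[Proposition 4.1]{BCOM22} for smoothness and then Zak's theorem (\cite[Remark 3.18]{Za93}, quoted at the start of Section~\ref{sec4}) that smooth varieties have non-degenerate Gauss map, whereas you compute the affine tangent space at $[f^d]$ as $f^{d-1}\cdot k[x_0,\dots,x_n]_a$, extract $f^{d-1}$ as the GCD of that linear system (valid for general $f$ and $n\geq 1$, since $|\mathcal{O}(a)|$ is base-point free), and conclude by unique factorization that the Gauss map is generically injective. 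Your argument is more elementary and self-contained, and in fact proves the stronger statement that the Gauss map is birational onto its image; the paper's citation-based route is shorter and does not depend on the explicit parametrization. One small caveat: your parenthetical explanation of \emph{why} this improves \cite[Corollary 4.4]{BCOM22} (the ``re-invested unit'') is speculative and does not quite match the mechanism — the gain of Theorem~\ref{main2} is that it replaces a tangential weak defectiveness hypothesis at level $h$ or $h+1$ by the much weaker non-degeneracy of the Gauss map — but this remark is commentary and does not affect the validity of the proof.
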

\begin{proof}
By \cite[Proposition 4.1]{BCOM22} the variety $\mathcal{V}_{a,d,n}$ is smooth and hence it is not $1$-tangentially weakly defective. Hence, the claim follows from \cite[Theorem 4.3]{BCOM22} and Theorem \ref{mainB}.
\end{proof}

\begin{Remark}
Similar identifiability bounds for Chow-Veronese varieties and more generally for varieties parametrizing products of polynomials can be obtained from Theorem \ref{mainB} together with the non defectiveness results in \cite{CGGHMNS19} and \cite{TV21}.
\end{Remark}

\bibliographystyle{amsalpha}
\bibliography{Biblio} 
\end{document}